\theoremstyle{plain}
\newtheorem{theorem}{Theorem}
\newtheorem{definition}[theorem]{Definition}
\newtheorem{proposition}[theorem]{Proposition}
\newtheorem{remark}[theorem]{Remark}
\newtheorem{corollary}[theorem]{Corollary}
\def\N{\mathbb{N}}
\def\R{\mathbb{R}}
\def\P{\mathcal{P}}
\def\ex{\mathbb{E}}
\def\eins{\mathds{1}}
\def\lin{{\rm lin}}
\def\ve{\varepsilon}
\numberwithin{equation}{section}
\begin{document}

\title[Optimal approximation of Skorohod integrals]{Optimal approximation of Skorohod integrals}\label{}
 
 \author[A. Neuenkirch and P. Parczewski]{Andreas Neuenkirch and Peter Parczewski
}

\address{University of Mannheim, Institute of Mathematics, A5, 6, D-68131 Mannheim, Germany.}
\email{neuenkirch@math.uni-mannheim.de, parczewski@math.uni-mannheim.de}

\date{\today}

\begin{abstract}
In this manuscript, we determine the optimal approximation rate for Skorohod integrals of sufficiently regular integrands. This generalizes the optimal approximation results for It\^o integrals. However, without adaptedness and the It\^o isometry, new proof techniques are required. The main tools are a characterization via S-transform and  a reformulation of the Wiener chaos decomposition in terms of Wick-analytic functionals. 
\end{abstract}

\keywords{Skorohod integral, optimal approximation, Wick product, S-transform}

\subjclass{60H05, 60H07, 60H35}

\maketitle

\section{Introduction}

In several applications, e.g. the computation of derivative-free option price sensitivities

\cite{Fournie, Chen_Glasserman} or the payoff-smoothing in mathematical finance \cite{AN},  Skorohod integrals of the type
\begin{align*}
I=\int_0^1 u_s dW_s 
\end{align*}
arise. Here $u=(u_t)_{t \in [0,1]}$ is a possibly non-adapted process and $W=(W_t)_{t \in [0,1]}$ is a Brownian motion. The Skorohod integral is an extension of the standard It\^o integral, see e.g. \cite{Nualart, Di_Nunno_und_so, Holden_Buch} and Section \ref{Section_Skorohod_integrals}. The standard strategy to numerically deal with these expressions has been to rewrite the integral with an integration by parts formula, see e.g. Proposition 1.3.3 in \cite{Nualart} or Theorem 3.15 in \cite{Di_Nunno_und_so} , which (hopefully) leads to a simpler expression involving It\^o integrals instead of Skorohod integrals, and to discretize these integrals then.

However, the best possible rate of convergence for the $L^2$-approximation of $I$, given the knowledge of the integrand $u$ and a finite number of evaluations of $W$, has not been analysed so far. This motivates us to study the following question: What is the optimal convergence rate for the $L^2$-approximation of
 \begin{align} \label{eq_Skorohod_int_intro} 
 I=\int_0^1 f(s, W_s, W_{\tau_2}, \ldots, W_{\tau_K}) dW_s, 
 \end{align}
where $\tau_2, \ldots, \tau_K \in [0,1]$ are fixed timepoints of nonadaptedness, given complete information of $f: \mathbb{R}^K \rightarrow \mathbb{R}$ and knowledge
of $W_{1/n}, \ldots, W_{1}$, $W_{\tau_2}, \ldots, W_{\tau_K}$ ? Clearly the optimal approximation is
 \begin{align} \label{eq_Skorohod_int_intro_opt} 
 \hat{I}^n = \mathbb{E} \left[\left. \int_0^1 f(s, W_s, W_{\tau_2}, \ldots, W_{\tau_K}) dW_s \right|W_{\frac{1}{n}}, W_{\frac{2}{n}}, \ldots, W_1, W_{\tau_2}, \ldots, W_{\tau_K}\right], 
 \end{align}
 thus it remains to determine
 \begin{align} \label{eq_Skorohod_int_intro_opt_rate}  
 e_n := \ex[(I-\hat{I}^n)^2]^{1/2}. 
 \end{align}
 Obviously, integral \eqref{eq_Skorohod_int_intro} contains only a finite and fixed nonadapted part $W_{\tau_2}, \ldots, W_{\tau_K}$. This is much simpler than the original problem for arbitrary nonadapted processes, but its analysis will give us an indication, which convergence rates are best possible for the general problems.
 
 Our findings are as follows: Under some smoothness and growth conditions on $f$ (see Theorem \ref{thm_optimal_approx_simple_Skorohod}), we obtain the asymptotic behaviour
 \begin{align}\label{eq:ConvRateIntro}
 e_n \approx   \frac{1}{\sqrt{12}}\cdot C(f,\tau_2, \ldots, \tau_k) \cdot n^{-1} 
 \end{align}
 with 
 $$ C(f,\tau_2, \ldots, \tau_k)= \left(\int_{0}^{1} \ex[\mathcal{L} f^{\diamond}(s, \tau_2,\ldots, \tau_K,W_s,W_{\tau_2}, \ldots, W_{\tau_K})^2] ds\right)^{1/2},$$
where $f^{\diamond} : [0,1]^K \times \mathbb{R}^K \rightarrow \mathbb{R}$ arises from the Wick-analytic representation of $  f(s, W_s, W_{\tau_2}, \ldots, W_{\tau_K})$ 
via the chaos decomposition and
\begin{equation*}
\mathcal{L}:= \left(\sum_{1\leq k \leq K}\dfrac{\partial}{\partial t_k} + \frac{1}{2} \sum_{1\leq k,l \leq K} \dfrac{\partial^2}{\partial x_k \partial x_l}\right)
\end{equation*}
denotes the differential operator in the Skorohod It\^o formula, see Theorem \ref{thm_Ito_formula_Skorohod}.

For adapted integrands $u_s=f(s,W_s)$ with $f \in C^{1,2}$ we have $\mathcal{L} f^{\diamond} = \mathcal{L} f$ and \eqref{eq:ConvRateIntro} is a generalization of results on optimal approximation of It\^o integrals. The complexity of It\^o integration, i.e. the rate of convergence in \eqref{eq:ConvRateIntro}, is already established in \cite{Wasilkowski_Wozniakowski}. In \cite{Mueller_Gronbach} this is extended to the optimal approximation of solutions of stochastic differential equations. In the latter article the constant $C$ is determined via a differential operator $\mathcal{L}$ (for $K=1$). 
If $C(f,\tau_2, \ldots, \tau_k)=0$, then $I$ can be simulated exactly -- at least theoretically -- and  we have
\begin{align} \label{exact} I= \hat{I}^n \qquad \Longleftrightarrow \qquad C(f,\tau_2, \ldots, \tau_k)=0. \end{align}
This is in line with the analysis in \cite{Przybylowicz} for It\^o integrals.



In contrast to the It\^o case, the computation of \eqref{eq_Skorohod_int_intro_opt_rate} cannot be reduced to an It\^o isometry. However, due to a Wiener chaos expansion of the nonadapted processes and advantageous reformulations of Skorohod integrals in terms of Wick products, the lack of isometry is handled. Our assumptions on $f$ ($f \in C^{1;2,\ldots, 2}([0,1]\times \R^{K})$ and some integrability and H\"older growth conditions (see Section \ref{Section_Optimal_approx_Skorohod_integrals}) 
are partially necessary for the existence of the Skorohod integral. These assumptions are weaker than in the work on optimal approximation of It\^o integrals. 
In particular, due to a convenient reformulation of \eqref{eq_Skorohod_int_intro} into Wick-analytic functionals, we are able to separate the dependence of $f$ on the dynamic evaluation $W_s$ and the nonadapted parts $W_{\tau_2}, \ldots, W_{\tau_K}$. Thanks to this we obtain several equivalent conditions, when an exact simulation of \eqref{eq_Skorohod_int_intro_opt_rate} is possible, from which we can deduce \eqref{exact}.
 The additional tools in the proofs are a characterization of random variables via S-transform, an intermediate value theorem for finite chaos terms and a diagonal argument to include infinite chaos elements.


The paper is organized as follows: In Section \ref{Section_Skorohod_integrals} we give a self-contained introduction to Skorohod integrals of the type \eqref{eq_Skorohod_int_intro} and the fruitful refomulations of the Wiener chaos decomposition and conditional expectations in terms of Wick calculus. The section ends with a Skorohod It\^o formula for our approach. Section \ref{Section_Exact_sim} is devoted to the exact simulation of the Skorohod integral \eqref{eq_Skorohod_int_intro}. The main result on the optimal approximation of Skorohod integrals is the content of Section \ref{Section_Optimal_approx_Skorohod_integrals}. 

\section{Skorohod integrals}\label{Section_Skorohod_integrals}

We suppose a Brownian motion $(W_{t})_{t \in [0,1]}$ on the probability space $(\Omega, \mathcal{F}, P)$, where the $\sigma$-field $\mathcal{F}$ is generated by the Brownian motion and completed by null sets. Therefore the stochastic calculus is based on the Gaussian Hilbert space $\{I(f) : f \in L^2([0,1])\} \subset L^2(\Omega)$, where $I(f)$ denotes the Wiener integral. We denote the norm and inner product on $L^2([0,1])$ by $\|\cdot \|$ and $\langle \cdot, \cdot\rangle$.

We establish the Skorohod integral by the S-transform and make use of the connection to Wick calculus. An essential tool and, roughly speaking, our paradigm of studying Skorohod integrals, is the reformulation of processes into Wick-analytic versions via the Wiener chaos decomposition. Aiming at optimal approximation, we collect some basic properties of conditional expectations and Skorohod integrals. At the end of this section, we present an It\^o formula for Skorohod integrals. Its proof illustrates the advantage of using Wick-analytic representations and will be crucial to derive our results.

For every $f \in L^2([0,1])$, we denote 
\begin{equation}\label{eq_Wick_exp}
\exp^{\diamond}(I(f)) := \exp\left(I(f) - 1/2\|f\|^2\right)
\end{equation}
the \emph{Wick exponential}. Due to the generating function, we have
\begin{equation}\label{eq_Wick_exp_by Hermite_polynomials} \exp^{\diamond}(I(f)) = \sum_{k=0}^{\infty} \frac{1}{k!}\,h^k_{\| f \|^2}(I(f)), \end{equation}
with the Hermite polynomials
\begin{equation*} h^{k}_{\sigma^{2}}(x) = (-\sigma^2)^{k} \exp\left(\frac{x^2}{2\sigma^2}\right) \frac{d^k}{dx^k} \exp\left(\frac{- x^2}{2\sigma^2}\right). \end{equation*}
In particular, the Wick exponentials exhibit the following renormalization and integrability properties. For a proof of these basic facts we refer to Theorem 3.33 and the Corollaries 3.37, 3.38, 3.40 in \cite{Janson}.

\begin{proposition}\label{prop_Wickexp_properties}
For $f,g \in L^2([0,1])$, $p>0$, we have:
\begin{enumerate}
 \item $\exp^{\diamond}(I(f)) = \frac{\exp(I(f))}{\ex[\exp(I(f))]}$,
 \item $\ex[\exp^{\diamond}(I(f)) \exp^{\diamond}(I(g))] = \exp(\langle f,g\rangle)$,
 \item $\ex[(\exp^{\diamond}(I(f)))^p] = \exp\left(\frac{p(p-1)}{2} \|f\|^2\right)$,
 \item The set $\{\exp^{\diamond}(I(f)) : f \in L^2([0,1])\}$ is total in $L^2(\Omega)$.
\end{enumerate}
\end{proposition}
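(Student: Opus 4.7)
The plan is to treat the four parts quite asymmetrically: (i)--(iii) are essentially one-line computations once we exploit that $I(f)$ is a centered Gaussian with variance $\|f\|^2$, while (iv) is the genuine content and needs the full power of the Wiener chaos expansion.

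For (i), I would just observe that $I(f) \sim \mathcal{N}(0,\|f\|^2)$, so $\ex[\exp(I(f))]=\exp(\tfrac12\|f\|^2)$, and the identity is immediate from the definition \eqref{eq_Wick_exp}. For (ii), I would use the linearity $I(f)+I(g)=I(f+g)$ to write
\begin{equation*}
\exp^{\diamond}(I(f))\exp^{\diamond}(I(g)) = \exp\bigl(I(f+g) - \tfrac12\|f\|^2 - \tfrac12\|g\|^2\bigr),
\end{equation*}
take expectation using the Gaussian moment generating function applied to $I(f+g)$, and expand $\|f+g\|^2 = \|f\|^2 + 2\langle f,g\rangle + \|g\|^2$ so the $\tfrac12\|f\|^2$ and $\tfrac12\|g\|^2$ terms cancel, leaving $\exp(\langle f,g\rangle)$. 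For (iii), a parallel computation: $(\exp^{\diamond}(I(f)))^p = \exp(I(pf)-\tfrac{p}{2}\|f\|^2)$, whose expectation is $\exp(\tfrac{p^2}{2}\|f\|^2 - \tfrac{p}{2}\|f\|^2)$.

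The main obstacle is part (iv), the totality. I would prove it by showing that every polynomial in $I(f_1),\ldots,I(f_n)$, for arbitrary $f_1,\ldots,f_n \in L^2([0,1])$, lies in the closed linear span $\mathcal{W}$ of $\{\exp^{\diamond}(I(f)) : f \in L^2([0,1])\}$. The key observation is that for any $t_1,\ldots,t_n \in \R$, $\exp^{\diamond}(I(t_1 f_1 + \cdots + t_n f_n)) \in \mathcal{W}$, and differentiating this expression term by term in $(t_1,\ldots,t_n)$ at the origin produces, via \eqref{eq_Wick_exp_by Hermite_polynomials} and the generating-function identity for Hermite polynomials, all Wick (Hermite) products in the $I(f_i)$. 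Justifying the interchange of derivative and $L^2$-closure is the routine but essential technical point, and it is handled via uniform $L^2$-bounds on the partial sums coming from (iii). Since ordinary monomials can be rewritten as linear combinations of such Hermite products, $\mathcal{W}$ contains all polynomials in any finite tuple $I(f_1),\ldots,I(f_n)$.

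Finally, I would conclude by invoking the standard fact that polynomials in Gaussian variables are dense in $L^2$ of their joint law (which in turn reduces to the denseness of polynomials in $L^2$ of a standard Gaussian on $\R^n$, and thence to the classical Hermite completeness theorem). Combined with the hypothesis that $\mathcal{F}$ is generated by $W$, hence by the family $\{I(f) : f \in L^2([0,1])\}$, a monotone class / approximation argument extends denseness from cylindrical polynomials to all of $L^2(\Omega)$, giving $\mathcal{W}=L^2(\Omega)$. This last step is where one most directly uses the assumption that $\mathcal{F}$ is the completed Brownian $\sigma$-algebra, and it is the point at which I would most carefully cite the reference in \cite{Janson}.
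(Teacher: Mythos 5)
Your computations for (i)--(iii) are correct and are exactly the elementary Gaussian facts behind the statement: $I(f)\sim\mathcal{N}(0,\|f\|^2)$, the moment generating function, and the polarization identity $\|f+g\|^2=\|f\|^2+2\langle f,g\rangle+\|g\|^2$. Note, however, that the paper itself gives no proof of this proposition at all; it simply cites Theorem 3.33 and Corollaries 3.37, 3.38, 3.40 of \cite{Janson}, so the only meaningful comparison is with the argument in that reference.

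For (iv) your route is correct but genuinely different from the one in the cited source. You pass through polynomials: the $L^2$-valued map $(t_1,\ldots,t_n)\mapsto\exp^{\diamond}(I(t_1f_1+\cdots+t_nf_n))$ is analytic, its Taylor coefficients at the origin are the Wick (Hermite) products $I(f_1)^{\diamond l_1}\diamond\cdots\diamond I(f_n)^{\diamond l_n}$, difference quotients of elements of the closed span $\mathcal{W}$ stay in $\mathcal{W}$, so all such products --- hence all cylindrical polynomials --- lie in $\mathcal{W}$; Hermite completeness for the Gaussian law and a monotone class argument using that $\mathcal{F}$ is Brownian-generated then give $\mathcal{W}=L^2(\Omega)$. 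The standard proof in \cite{Janson} (and in the white-noise literature) argues instead by duality and analytic continuation: if $X\perp\exp^{\diamond}(I(h))$ for all $h$, then $(t_1,\ldots,t_n)\mapsto\ex[X\exp(\sum_i t_iI(f_i))]$ vanishes on $\R^n$, extends to an entire function on $\C^n$, hence vanishes for purely imaginary arguments; by uniqueness of Fourier transforms $\ex[X\mid\sigma(I(f_1),\ldots,I(f_n))]=0$ for every finite family, and $X=0$ since $\mathcal{F}$ is generated by the $I(f)$. Your approach trades the complexification/Fourier-uniqueness step for the Hermite completeness theorem (which is itself a nontrivial input, resting on the exponential integrability of the Gaussian law); both uses of the hypothesis that $\mathcal{F}$ is the completed Brownian $\sigma$-field occur at the same final step. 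The one point you should make fully explicit when writing this up is the justification of $L^2$-analyticity, e.g.\ via $\|I(f)^{\diamond k}\|_{L^2}=\sqrt{k!}\,\|f\|^k$, so that the series $\sum_k\frac{1}{k!}\bigl(\sum_i t_iI(f_i)\bigr)^{\diamond k}$ converges in $L^2$ locally uniformly in $t$; your appeal to the uniform bounds from (iii) can be made to work but is the vaguest part of the sketch.
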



For every $X \in L^2(\Omega, \mathcal{F}, P)$ and $h \in L^2([0,1])$, the \emph{S-transform} of $X$ at $h$ is defined as
\[
(SX)(h) := \ex[X \exp^{\diamond}(I(h))]. 
\]

For every $X \in L^2(\Omega, \mathcal{F}, P)$, $(SX)(\cdot)$ is a continuous function on $L^2([0,1])$. Moreover,  due to Proposition \ref{prop_Wickexp_properties} (iv), $S$ is an injective continuous linear map from the space $L^2(\Omega, \mathcal{F}, P)$ into the space of (analytic) functions on $L^2([0,1])$ (see e.g. \cite[Chapter 16]{Janson} for more details).

As an example, for $f,g \in L^2([0,1])$, we have
\begin{equation*}
(S \ \exp^{\diamond}(I(f)))(g) = \exp\left(\langle f,g\rangle\right).
\end{equation*}

The extension of the It\^{o} integral to nonadapted integrands is the Skorohod integral. Besides the definitions of the Skorohod integral via multiple Wiener integrals or as the adjoint of the Malliavin derivative (cf. \cite{Nualart, Janson}), there is a simple introduction via S-transforms (cf. e.g. \cite[Section 16.4]{Janson}):

\begin{definition}
Suppose $u = (u_t)_{t \in [0,1]} \in L^2(\Omega \times [0,1])$ is a (possibly nonadapted) square integrable process on $(\Omega, \mathcal{F}, P)$ and $Y \in L^2(\Omega, \mathcal{F}, P)$ such that
\[
\forall g \in L^2([0,1]): \ (S Y)(g) = \int_{0}^{1} (S u_s)(g) g(s) ds, 
\]
then $\int_{0}^{1}u_s dW_{s} = Y$ defines the Skorohod integral of $u$ with respect to the Brownian motion $W$. 
\end{definition}

For more information on the Skorohod integral we refer to \cite{Janson} or \cite{Kuo}.

We recall that for every $k \in \N$ the $k$-th Wiener chaos $H^{:k:}$ is the completion of $\{h^k_{\| f \|^2}(I(f)) : f \in L^2([0,1])\}$ in $L^2(\Omega)$ and these subspaces are orthogonal and fulfill $L^2(\Omega, \mathcal{F}, P) = \bigoplus_{k \geq 0} H^{:k:}$. Thus, for every random variable $X \in L^2(\Omega)$ we denote its \emph{Wiener chaos decomposition} as
\[
X=\sum_{k=0}^{\infty} \pi_k(X),
\]
for the projections $\pi_k: L^2(\Omega) \rightarrow H^{:k:}$. We refer to \cite{Janson, Holden_Buch} for further details and a reformulation in terms of multiple Wiener integrals. We recall that a process $u \in L^2(\Omega \times [0,1])$ is Skorohod integrable if and only if
\[
\sum_{k=0}^{\infty} (k+1) \|\pi_k(u_s)\|^2_{L^2(\Omega \times [0,1])} < \infty. 
\]

The S-transform is closely related to a product imitating uncorrelated random variables as $\ex[X \diamond Y]=\ex[X]\ex[Y]$. As a consequence of Proposition \ref{prop_Wickexp_properties} and the injectivity of the S-transform, we have

\begin{proposition}\label{prop_Wick_product}
 We define the \emph{Wick product}\index{$\diamond$} by
\begin{align*}
\mathcal{D}^{\diamond} := &\left\{(X,Y) \in L^{2}(\Omega) \times L^{2}(\Omega) :\right.
 \\
 & \ \ \exists Z_{X,Y} \in L^{2}(\Omega) \ \forall g \in L^2([0,1]) \ \left.
(S Z_{X,Y})(g) = (S X)(g) (S Y)(g)
\right\}  \\ \diamond:&\; \mathcal{D}^{\diamond} \rightarrow L^{2}(\Omega),\quad (X,Y) \mapsto Z_{X,Y}.
\end{align*}
Then $\mathcal{D}^{\diamond}$ is a dense subset of $L^{2}(\Omega) \times L^{2}(\Omega)$,  the Wick product $\diamond$ is well-defined, bilinear and closed on $\mathcal{D}^{\diamond}$. In particular, $(\exp^{\diamond}(I(f)), \exp^{\diamond}(I(g))\in \mathcal{D}^{\diamond}$ for all $f,g \in L^2([0,1])$ and the functional equation
\begin{equation*}\label{eq_Wickpro_Wick_exp}
\exp^{\diamond}(I(f)) \diamond \exp^{\diamond}(I(g)) = \exp^{\diamond}(I(f+g))
\end{equation*}
 is valid. 
\end{proposition}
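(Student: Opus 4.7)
The plan is to derive each assertion from the injectivity and continuity of the $S$-transform together with the formula $(S\exp^{\diamond}(I(f)))(g) = \exp(\langle f,g\rangle)$ noted above. Well-definedness of the map $(X,Y)\mapsto Z_{X,Y}$ is immediate: if two elements of $L^{2}(\Omega)$ share the same $S$-transform, injectivity of $S$ forces them to coincide, so $Z_{X,Y}$ is uniquely determined when it exists. Bilinearity then follows from the linearity of $S$ in its argument: given $(X_1,Y),(X_2,Y)\in\mathcal{D}^{\diamond}$ with witnesses $Z_1,Z_2$, the element $aZ_1+bZ_2\in L^{2}(\Omega)$ witnesses that $(aX_1+bX_2,Y)\in\mathcal{D}^{\diamond}$ with product $aZ_1+bZ_2$, and symmetrically in the second coordinate.

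Next I would handle the Wick-exponential case, which simultaneously yields density and the functional equation. For $f,f'\in L^{2}([0,1])$, the identity
\begin{equation*}
(S\exp^{\diamond}(I(f)))(g)\,(S\exp^{\diamond}(I(f')))(g)=\exp(\langle f+f',g\rangle)=(S\exp^{\diamond}(I(f+f')))(g)
\end{equation*}
holds for every $g\in L^{2}([0,1])$, so $(\exp^{\diamond}(I(f)),\exp^{\diamond}(I(f')))\in\mathcal{D}^{\diamond}$ and the functional equation follows from injectivity of $S$. Combined with the bilinearity already established, $\mathcal{D}^{\diamond}$ contains every pair of finite linear combinations of Wick exponentials, and Proposition \ref{prop_Wickexp_properties}(iv) ensures that such combinations are dense in $L^{2}(\Omega)$, so $\mathcal{D}^{\diamond}$ is dense in $L^{2}(\Omega)\times L^{2}(\Omega)$.

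For closedness, suppose $(X_n,Y_n)\in\mathcal{D}^{\diamond}$ with $X_n\to X$, $Y_n\to Y$, and $X_n\diamond Y_n\to Z$ in $L^{2}(\Omega)$. For each fixed $g\in L^{2}([0,1])$, the map $U\mapsto (SU)(g)=\ex[U\exp^{\diamond}(I(g))]$ is a bounded linear functional on $L^{2}(\Omega)$ (it is evaluation against a fixed $L^{2}$ element), so passing to the limit gives
\begin{equation*}
(SZ)(g)=\lim_{n\to\infty}(SX_n)(g)(SY_n)(g)=(SX)(g)(SY)(g).
\end{equation*}
Hence $(X,Y)\in\mathcal{D}^{\diamond}$ with $X\diamond Y=Z$, which is exactly closedness in the graph sense. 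The only mildly subtle point I anticipate is that $\diamond$ is not itself $L^{2}$-continuous, so one cannot hope for joint continuity of the product on $L^{2}(\Omega)\times L^{2}(\Omega)$; it is precisely the pointwise-in-$g$ convergence of the S-transforms, rather than any norm convergence of the product, that makes the closedness argument go through.
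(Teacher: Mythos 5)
Your proposal is correct and follows essentially the same route the paper indicates, namely deducing everything from the injectivity and continuity of the S-transform together with the identity $(S\exp^{\diamond}(I(f)))(g)=\exp(\langle f,g\rangle)$ and the totality of Wick exponentials from Proposition \ref{prop_Wickexp_properties} (iv); the paper merely states the result as a consequence of these facts, and your write-up supplies the details (well-definedness, bilinearity, density, graph-closedness) in the expected way.
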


For more details on the Wick product we refer to \cite{Janson, Holden_Buch}. The Wick calculus is a fundamental tool in stochastic analysis (cf. \cite[Chapter 7]{Janson}, \cite{Kuo,Holden_Buch}) and closely related to the Skorohod integral, cf. Proposition \ref{prop_S_transform_properties}.

For a proof of the following reformulation of Wick products of Gaussian random variables in terms of generalized Hermite polynomials (see e.g. \cite{Avram_Taqqu}) we refer to \cite[Chapter 3]{Janson}, cf. \cite[Lemma 2.1]{P2}:

\begin{proposition}\label{prop_Wick_product_by_Wick_exp}
Suppose $f_1, \ldots, f_K \in L^2([0,1])$, $l_1,\ldots, l_K \in \N$. Then there exists a polynomial $h:\R^K \rightarrow \R$ such that
\[
h(x_1,\ldots, x_K) =  \dfrac{\partial^{\sum l_i}}{\partial a_1^{l_1} \cdots \partial a_K^{l_K}} \left.\exp\left(\sum a_i x_i - \frac{1}{2} \|\sum a_i f_i\|^2\right)\right|_{a_1=\ldots =a_K=0}, 
\]
and
\[
I(f_1)^{\diamond l_1} \diamond \cdots \diamond I(f_K)^{\diamond l_K} = h(I(f_1), \ldots, I(f_K)).
\]
In particular, for fixed  $t_1, \ldots, t_K \in [0,1]$, we have the  representation
\begin{align}\label{eq_Wick_product_by_Wick_exp}
W_{t_1}^{\diamond l_1} \diamond \cdots \diamond W_{t_K}^{\diamond l_K} = h^{\diamond}(t_1,\ldots, t_K, W_{t_1}, \ldots, W_{t_K})
\end{align}
for some polynomial $h^{\diamond}: [0,1]^K \times \R^K \rightarrow \R$.

\end{proposition}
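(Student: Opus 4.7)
My plan is to identify both sides of the claimed identity via injectivity of the S-transform (Proposition \ref{prop_Wickexp_properties}(iv)). First, the defining property of the Wick product (Proposition \ref{prop_Wick_product}) together with $(SI(f))(g) = \langle f, g\rangle$ yield
\begin{equation*}
S\bigl(I(f_1)^{\diamond l_1} \diamond \cdots \diamond I(f_K)^{\diamond l_K}\bigr)(g) = \prod_{i=1}^K \langle f_i, g\rangle^{l_i} \quad \text{for every } g \in L^2([0,1]).
\end{equation*}
It thus suffices to verify the same formula for the S-transform of $h(I(f_1),\ldots,I(f_K))$, where $h$ is the polynomial defined in the statement.

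For this computation, consider the $L^2(\Omega)$-valued map
\begin{equation*}
F(a) := \exp^{\diamond}\!\left(I\!\left(\sum_{i=1}^K a_i f_i\right)\right) = \exp\!\left(\sum_{i=1}^K a_i I(f_i) - \frac{1}{2}\Big\|\sum_{i=1}^K a_i f_i\Big\|^2\right), \qquad a \in \R^K.
\end{equation*}
By Proposition \ref{prop_Wickexp_properties}(ii), $(SF(a))(g) = \exp\bigl(\sum_i a_i \langle f_i, g\rangle\bigr)$. Differentiating the explicit exponential expression for $F(a)$ in the $a_i$ at $a = 0$ reproduces exactly $h(I(f_1),\ldots,I(f_K))$ (by the very definition of $h$ with $x_i = I(f_i)$), and the Gaussian integrability of the factors makes differentiation under the expectation routine. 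Applying the same mixed partial derivative to both sides of $(SF(a))(g) = \exp(\sum_i a_i \langle f_i, g\rangle)$ and interchanging $S$ with $\partial^{\sum l_i}/(\partial a_1^{l_1} \cdots \partial a_K^{l_K})$ therefore yields
\begin{equation*}
\bigl(S\,h(I(f_1),\ldots,I(f_K))\bigr)(g) = \prod_{i=1}^K \langle f_i, g\rangle^{l_i}.
\end{equation*}
Combined with the first display and injectivity of $S$, this proves the first assertion.

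For the specialization \eqref{eq_Wick_product_by_Wick_exp}, I choose $f_i = \eins_{[0,t_i]}$, so that $I(f_i) = W_{t_i}$ and $\|\sum_i a_i f_i\|^2 = \sum_{i,j} a_i a_j (t_i \wedge t_j)$; substituting into the derivative formula and allowing the $t_i$ to vary gives the required map $h^{\diamond}:[0,1]^K \times \R^K \to \R$, polynomial in the $x$-variables with coefficients depending on the covariances $(t_i \wedge t_j)_{1 \le i,j \le K}$. The main technical subtlety is the justification of interchanging differentiation with both expectation and S-transform; this is handled cleanly by the $L^p$-integrability bounds of Proposition \ref{prop_Wickexp_properties}(iii), which make $F$ jointly smooth in $a$ as an $L^2(\Omega)$-valued map.
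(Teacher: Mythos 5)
Your argument is correct, and it is worth noting that the paper itself does not prove this proposition at all: it delegates the proof to the literature (Janson, Chapter 3, cf.\ Avram--Taqqu and \cite[Lemma 2.1]{P2}), where the standard route is to expand the Wick exponential in two ways --- as the generating function of Wick powers, i.e.\ \eqref{eq_Wick_exp_by_Wick_powers} applied to $I(\sum_i a_i f_i)$, and as the explicit Gaussian exponential $\exp(\sum_i a_i x_i - \tfrac12\|\sum_i a_i f_i\|^2)$ --- and to match the coefficients of the multi-powers of $a$, which are exactly the generalized Hermite polynomials $h$. You use the same generating-function object $F(a)=\exp^{\diamond}(I(\sum_i a_i f_i))$, but instead of matching chaos coefficients you identify the two candidate random variables through injectivity of the S-transform, computing $(SF(a))(g)=\exp(\sum_i a_i\langle f_i,g\rangle)$ via Proposition \ref{prop_Wickexp_properties} and differentiating at $a=0$; this buys a proof that stays entirely inside the paper's own toolkit (Propositions \ref{prop_Wickexp_properties} and \ref{prop_Wick_product}), at the price of having to justify the interchange of $\partial_a$ with the expectation/S-transform, which you do adequately via the $L^p$ bounds of Proposition \ref{prop_Wickexp_properties}(iii). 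Two small points you should make explicit: the identity $(SI(f))(g)=\langle f,g\rangle$ used for the left-hand side is not stated in the paper and deserves a one-line Gaussian computation (or differentiation of $(S\exp^{\diamond}(I(af)))(g)$ at $a=0$), and in the specialization \eqref{eq_Wick_product_by_Wick_exp} the coefficients of $h^{\diamond}$ involve the covariances $t_i\wedge t_j$, so $h^{\diamond}$ is a polynomial in the $x$-variables with only piecewise-polynomial dependence on $(t_1,\ldots,t_K)$ --- which is what the paper intends, and you correctly flag this, but it should be said rather than left implicit in the word ``polynomial''.
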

The Wick exponential and Hermite polynomials of Gaussian random variables in \eqref{eq_Wick_exp} and \eqref{eq_Wick_exp_by Hermite_polynomials} are reformulated as
\begin{equation}\label{eq_Wick_exp_by_Wick_powers}
\exp^{\diamond}(I(f)) = \sum\limits_{k=0}^{\infty}\frac{1}{k!} I(f)^{\diamond k}. 
\end{equation}

\begin{remark}\label{remark_Wiener_chaos_by_Wick_products}
We recall the following reformulation of the Wiener chaos expansion. For fixed  $t_1, \ldots, t_K \in [0,1]$, $f: \R^K \rightarrow \R$ and a square integrable left hand side, we have a representation
\begin{equation}\label{eq_Wiener_chaos_by_Wick_products}
f(t,W_{t_1}, \ldots, W_{t_K}) = \sum_{l_1, \ldots, l_K\geq 0} a_{l_1, \ldots, l_K}(t) W_{t_1}^{\diamond l_1} \diamond \cdots \diamond W_{t_K}^{\diamond l_K}, \qquad t \in [0,1],
\end{equation} 
where $a_{l_1, \ldots, l_K}:  [0,1] \rightarrow \R$ for all $l_1, \ldots, l_K\geq 0$.
Given such a square integrable random variable $f(t,W_{t_1}, \ldots, W_{t_K})$, the expansion \eqref{eq_Wiener_chaos_by_Wick_products} is obtained as follows: W.l.o.g. let $t_1 < \ldots < t_K$. Then the random variables $W_{t_1}, W_{t_2}-W_{t_1}, \ldots, W_{t_K}- W_{t_{K-1}}$ are orthogonal in $L^2(\Omega)$ and all Wick products of these elements constitute an orthogonal basis of the space of square integrable $\sigma(W_{t_1}, \ldots, W_{t_K})$-measurable random variables (cf. \cite[Theorem 3.21]{Janson})). Hence, an expansion with respect to this basis and the bilinearity of the Wick product yields a representation \eqref{eq_Wiener_chaos_by_Wick_products}.

\end{remark}

\begin{remark}\label{remark_sine}
We notice that in many cases the representation \eqref{eq_Wiener_chaos_by_Wick_products} can be simply obtained by reduction on Hermite polynomials or Wick exponentials. As an example, by a reduction on the Wick-sine of a Gaussian random variable $X$ as
\begin{equation}\label{eq_Wick_sine}
\sin^{\diamond}(X) = \sin(X)\exp^{\frac{1}{2}\|X\|_{L^2}^2} = \sum_{M=0}^{\infty} \frac{(-1)^{M-1}}{(2M-1)!} X^{\diamond (2M-1)},  
\end{equation}
cf. \cite[p. 107]{Holden_Buch} and the bilinearity of the Wick product, we observe
\begin{align*}
 &\sin\left(t\sum_{i=1}^{K} W_{t_i}\right) = \exp\left(-\frac{t^2}{2}\|\sum_{i=1}^{K} W_{t_i}\|_{L^2}^2\right)\sin^{\diamond}\left(t\sum_{i=1}^{K} W_{t_i}\right)\\
 &= \exp\left(-\frac{t^2}{2}\|\sum_{i=1}^{K} W_{t_i}\|_{L^2}^2\right) \sum_{M=0}^{\infty} \frac{(-1)^{M-1}t^{2M-1}}{(2M-1)!} \sum\limits_{\sum l_i = 2M-1} \binom{2M-1}{l_1, \ldots, l_K}W_{t_1}^{\diamond l_1} \diamond \cdots \diamond W_{t_K}^{\diamond l_K},
\end{align*}
i.e. the representation \eqref{eq_Wiener_chaos_by_Wick_products} with
\[
a_{l_1, \ldots, l_K}(t) =  \eins_{\{\sum l_i \in 2\N+1\}} \frac{(-1)^{(\sum l_i-1)/2}t^{2M-1}}{\sum l_i}\binom{\sum l_i}{l_1, \ldots, l_K}\exp\left(-\frac{t^2}{2}\|\sum_{i=1}^{K} W_{t_i}\|_{L^2}^2\right).
\] 
\end{remark}

In particular we have the following properties of S-transforms and Skorohod integrals:

\begin{proposition}\label{prop_S_transform_properties}
Suppose $X \in L^2(\Omega)$ and a (Skorohod integrable) process  $u\in L^2(\Omega \times [0,1])$. Then:
\begin{align*}
&(i) \ \forall g \in L^2([0,1]) \ (SX)(g)= \sum\limits_{k=0}^{\infty} (S \pi_k(X))(g),\\
&(ii) \ \forall g \in L^2([0,1]) \ \int_{0}^{1} (S u_s)(g) g(s) ds = \sum_{k \geq 0} \int_{0}^{1} (S \pi_k(u_s))(g) g(s) ds,\\
&(iii) \ \int_{0}^{1} X \diamond u_s dW_s = X \diamond \int_{0}^{1} u_s dW_s, \quad \textnormal{if both sides exist in } L^2(\Omega)\\
&(iv) \ \int_{0}^{1} X \diamond u_s ds = X \diamond \int_{0}^{1} u_s ds , \quad \textnormal{if both sides exist in } L^2(\Omega).
\end{align*}
\end{proposition}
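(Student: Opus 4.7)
The plan is to exploit that the S-transform is an injective, continuous linear map on $L^2(\Omega)$ which turns Wick products into pointwise products. All four claims then reduce either to passing a continuous linear functional through a convergent series / Bochner integral (parts (i), (ii)) or to matching S-transforms and invoking injectivity (parts (iii), (iv)). I would handle (i), (ii) first, because the arguments for (iii), (iv) feed on (ii).

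For part (i), first observe that, for fixed $g \in L^2([0,1])$, the map $X\mapsto (SX)(g)=\ex[X\exp^{\diamond}(I(g))]$ is a continuous linear functional on $L^2(\Omega)$ by Cauchy--Schwarz and Proposition \ref{prop_Wickexp_properties}(iii). Applying this functional term by term to the $L^2$-convergent chaos expansion $X=\sum_{k\ge 0}\pi_k(X)$ gives (i). For part (ii), the estimate $\int_0^1 \|u_s\|_{L^2(\Omega)}|g(s)|\,ds \le \|u\|_{L^2(\Omega\times[0,1])}\|g\|$ shows that $Y:=\int_0^1 u_s g(s)\,ds$ exists as a Bochner integral in $L^2(\Omega)$ and that $(SY)(g)=\int_0^1 (Su_s)(g)g(s)\,ds$ by linearity. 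I would then apply (i) to $Y$ and use that the bounded operator $\pi_k$ commutes with the Bochner integral, so $\pi_k(Y)=\int_0^1 \pi_k(u_s) g(s)\,ds$ and hence $(S\pi_k(Y))(g)=\int_0^1 (S\pi_k(u_s))(g)g(s)\,ds$, which together with (i) proves (ii).

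For parts (iii) and (iv), assuming both sides lie in $L^2(\Omega)$, I would compute the S-transform of each side at arbitrary $g \in L^2([0,1])$ and invoke injectivity. For (iii), combining the definition of the Skorohod integral with the multiplicative property of $S$ on Wick products (Proposition \ref{prop_Wick_product}) gives
\[
\Bigl(S\int_0^1 X\diamond u_s\,dW_s\Bigr)(g) = \int_0^1 (SX)(g)(Su_s)(g)g(s)\,ds = (SX)(g)\,\Bigl(S\int_0^1 u_s\,dW_s\Bigr)(g),
\]
which matches $\bigl(S(X\diamond \int_0^1 u_s\,dW_s)\bigr)(g)$ directly. Part (iv) will be the same argument verbatim, with the Skorohod-integral definition replaced by the Fubini identity $(S\int_0^1 u_s\,ds)(g)=\int_0^1 (Su_s)(g)\,ds$, which itself follows from the Bochner argument of (ii) applied with $g\equiv 1$.

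The hard parts will be the Bochner/Fubini interchanges in (ii) and (iv) and a minor existence subtlety in (iii): for $(S(X\diamond u_s))(g)=(SX)(g)(Su_s)(g)$ to be meaningful, $X\diamond u_s$ must lie in $L^2(\Omega)$ for almost every $s$, which is automatic from the hypothesis that $\int_0^1 X\diamond u_s\,dW_s$ exists as a Skorohod integral, forcing $(s,\omega)\mapsto(X\diamond u_s)(\omega)\in L^2(\Omega\times[0,1])$. Beyond these points, everything reduces to the injectivity machinery developed earlier in this section.
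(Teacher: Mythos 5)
Your argument is correct, but it does not follow the paper's route everywhere. For (i) and (ii) the paper simply cites Janson (Theorem 16.16 and Lemma 16.48); your continuous-linear-functional and Bochner-integral arguments are exactly the standard proofs of those facts, so nothing is lost there. The real divergence is in (iii): the paper first proves the identity for the finite chaoses $\pi_k(u_s)$, analogously to Janson's Theorem 16.51, and then sums up using (i) and (ii), whereas you match S-transforms of the two sides directly -- using the defining property of the Skorohod integral on the left and the multiplicativity of $S$ on Wick products from Proposition \ref{prop_Wick_product} -- and conclude by injectivity. Your route is shorter and makes transparent that, once both sides are \emph{assumed} to exist in $L^2(\Omega)$ (which, as you note, forces $(X,u_s)\in\mathcal{D}^{\diamond}$ for a.e.\ $s$ and $X\diamond u_{\cdot}\in L^2(\Omega\times[0,1])$, so every S-transform identity you invoke is licensed), the statement is essentially a tautology of the definitions; it also does not need (i) and (ii) at all. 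The paper's chaos-by-chaos reduction is the argument one would use to control the domain questions more explicitly and is the reason (i) and (ii) are bundled into the proposition, but under the stated existence hypotheses both proofs are valid. Your treatment of (iv) coincides with the paper's ("S-transform and Fubini"); only the phrase about applying the part-(ii) argument "with $g\equiv 1$" is loosely worded -- you mean that the weight in the Bochner integral is $1$ while the S-transform is still evaluated at an arbitrary $g$ -- but this is cosmetic, not a gap.
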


\begin{proof}
For (i) and (ii) see e.g. \cite[Theorem 16.16 (ii)]{Janson} and \cite[Lemma 16.48]{Janson}. The statement (iii) for finite chaoses $\pi_k(u_s)$ follows analogously to the proof of \cite[Theorem 16.51]{Janson} from the definition of the Skorohod integral and Proposition \ref{prop_Wick_product}. Then,  by (i) and (ii), we conclude the assertion. Finally, an application of the S-transform and Fubini's theorem yields (iv).
\end{proof}


Dealing with $L^2$-norms of Gaussian random variables, we will frequently make use of
\begin{equation}\label{eq_Wick_product_of_Gaussian_inner_product}
\ex\left[\left(I(f_1) \diamond \cdots \diamond I(f_n)\right) \left(I(g_1) \diamond \cdots \diamond I(g_m)\right)\right] = \delta_{n,m} \sum\limits_{\sigma \in \mathcal{S}_{n}} \prod\limits_{i=1}^{n} \langle f_i, g_{\sigma(i)}\rangle,
\end{equation}
for all $n,m \in \N$ $f_1,\ldots, f_n, g_1,\ldots, g_m \in L^2([0,1])$, where $\mathcal{S}_{n}$ denotes the group of permutations on $\{1, \ldots, n\}$ (see e.g. \cite[Theorem 3.9]{Janson}). In particular, this implies
\begin{equation*}\label{eq_Wick_product_of_Gaussian_L_2_norm_inequality}
\ex\left[\left(I(f_1)^{\diamond k_1} \diamond \cdots I(f_n)^{\diamond k_n}\right)^2\right] \leq n! \prod_{i=1}^n k_i! \|f_i\|^{2k_i},
\end{equation*}
(cf. \cite[Proposition 3.1]{P2}). The infinite chaos random variables are extended to the following class of random variables (cf. \cite{Buckdahn_Nualart}): 

\begin{definition}\label{def_Wick_analytic_functionals}
We define the class of \emph{Wick-analytic functionals} as 
\begin{equation}\label{eq_Wick_analytic_functional}
F^{\diamond}(I(f_1), \ldots, I(f_K)) = \sum\limits_{k=0}^{\infty}a_{1,k} I(f_1)^{\diamond k} \diamond \cdots \diamond \sum\limits_{k=0}^{\infty} a_{K,k} I(f_K)^{\diamond k}, \ \  \max_{i \leq K}\sup\limits_{k} \sqrt[k]{k! |a_{i,k}|}  < \infty. 
\end{equation}
\end{definition}

These Wick analytic functionals are very close to the finite chaos elements for these reasons:

\begin{proposition}\label{prop_Wick_analytic_functionals_nice_properties}
Wick-analytic functionals in the sense of \eqref{eq_Wick_analytic_functional} fulfill for all $p \in \N$:
\begin{enumerate}
\item All moments are finite, i.e. $F^{\diamond}(I(f_1), \ldots, I(f_K)) \in L^p(\Omega)$.
 \item All Wick products of (a finite number of) Wick-analytic functionals exist in $L^p(\Omega)$. 
\end{enumerate}
\end{proposition}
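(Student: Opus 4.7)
The condition $\max_i \sup_k \sqrt[k]{k!|a_{i,k}|} < \infty$ is equivalent to the existence of a constant $M > 0$ with $|a_{i,k}| \leq M^k/k!$ for all $i,k$. My strategy for (i) is to expand $F^{\diamond}$ into a series of Wick monomials in the $I(f_i)$'s, bound each monomial's $L^p$-norm via Nelson's hypercontractivity, and then use the factorial decay of the coefficients to obtain absolute convergence of the full series.

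Concretely, I would first invoke the bilinearity of the Wick product (Proposition \ref{prop_Wick_product}) to write
\begin{equation*}
F^{\diamond}(I(f_1), \ldots, I(f_K)) = \sum_{k_1, \ldots, k_K \geq 0} a_{1,k_1} \cdots a_{K,k_K} \, I(f_1)^{\diamond k_1} \diamond \cdots \diamond I(f_K)^{\diamond k_K}.
\end{equation*}
The $L^2$-estimate displayed after \eqref{eq_Wick_product_of_Gaussian_L_2_norm_inequality}, together with the fact that the generic monomial sits in the $(k_1+\cdots+k_K)$-th Wiener chaos, allows one to apply Nelson's hypercontractivity to get
\begin{equation*}
\|I(f_1)^{\diamond k_1} \diamond \cdots \diamond I(f_K)^{\diamond k_K}\|_{L^p} \leq (p-1)^{(k_1+\cdots+k_K)/2} \sqrt{K! \, \prod_i k_i! \|f_i\|^{2k_i}}.
\end{equation*}
Inserting the coefficient bound and splitting the resulting $K$-fold sum into a product of one-dimensional sums then yields
\begin{equation*}
\|F^{\diamond}\|_{L^p} \leq \sqrt{K!} \, \prod_{i=1}^K \sum_{k=0}^{\infty} \frac{(M\sqrt{p-1}\|f_i\|)^k}{\sqrt{k!}} < \infty,
\end{equation*}
which proves (i). For (ii), I would take a finite number of Wick-analytic functionals, expand each into monomials, and use the associativity and bilinearity of $\diamond$ to rewrite the Wick product as a single series of Wick monomials in the union of the underlying $I(f)$'s; the coefficients satisfy an analogous factorial bound, so the same hypercontractivity argument produces an $L^p$-bound.

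The main obstacle is legitimising the formal expansion of $F^{\diamond}$ as an infinite $K$-fold sum of Wick monomials, since $\diamond$ is a priori only partially defined on $L^2(\Omega) \times L^2(\Omega)$. The resolution is to first apply the bound above with $p=2$ to obtain absolute convergence in $L^2$, which produces an $L^2$-limit $Z$; its S-transform equals the product of the factors' S-transforms (which are entire functions of $\langle f_i, g\rangle$ by the coefficient decay), and the injectivity of $S$ from Proposition \ref{prop_Wickexp_properties} then identifies $Z$ with the Wick product defining $F^{\diamond}$, simultaneously placing the relevant pairs in $\mathcal{D}^{\diamond}$. The same S-transform argument handles (ii).
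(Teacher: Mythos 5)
Your proof is correct, but it takes a genuinely different route from the paper. The paper's argument is a domination-and-resummation: for even $p$ it expands $\ex[(F^{\diamond})^{p}]$ via the pairing formula \eqref{eq_Wick_product_of_Gaussian_inner_product}, replaces the coefficients $a_{i,k}$ by $C^{k}/k!$ with $C=\max_{i}\sup_{k}\sqrt[k]{k!|a_{i,k}|}$ and the kernels $f_{i}$ by $|f_{i}|$ -- which makes every pairing term nonnegative -- and then recognizes the dominating series as $\ex\bigl[\bigl(\exp^{\diamond}(C\sum_{i}I(|f_{i}|))\bigr)^{p}\bigr]=\exp\bigl(\tfrac{p(p-1)}{2}C^{2}\|\sum_{i}|f_{i}|\|^{2}\bigr)$ by Proposition \ref{prop_Wickexp_properties} (iii); general $p$ follows by Lyapunov, and (ii) follows because Wick products of Wick-analytic functionals are again Wick-analytic, plus H\"older. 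You instead bound each Wick monomial in $L^{p}$ by combining the $L^{2}$ estimate following \eqref{eq_Wick_product_of_Gaussian_inner_product} with Nelson's hypercontractivity on the $(k_{1}+\cdots+k_{K})$-th chaos, and then sum the norms using the factorial decay $|a_{i,k}|\le M^{k}/k!$; the resulting product of series $\sum_{k}x^{k}/\sqrt{k!}$ indeed converges, so your estimate is sound. What each approach buys: the paper stays entirely within its declared toolkit (Wick-exponential moments and the pairing formula) and sidesteps hypercontractivity, but its interchange of expectation and the infinite sum, and the existence of the $K$-fold Wick product itself, are treated rather formally -- exactly the points your positivity-free argument handles cleanly, since you get norm-convergence of the monomial expansion in every $L^{p}$ and you identify the limit with $F^{\diamond}$ via the S-transform and its injectivity, thereby also verifying membership in $\mathcal{D}^{\diamond}$. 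The price is importing hypercontractivity, which the paper never states, though it is available in the cited reference of Janson (Chapter 5), so this is a legitimate and arguably more transparent alternative; your treatment of (ii) by merging the expansions into one Wick-analytic expansion with the same factorial bound is in substance the same observation the paper uses.
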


\begin{proof}
(i): \ Thanks to the bound in \eqref{eq_Wick_analytic_functional}, \eqref{eq_Wick_product_of_Gaussian_inner_product} and Proposition \ref{prop_Wickexp_properties} (iii), for $C=\max\limits_{i \leq K}\sup\limits_{k} \sqrt[k]{|k! a_{i,k}|}$ and $p \in 2 \N$, we have
\begin{align*}
\displaybreak[0]
0&\leq \ex[\left(F^{\diamond}(I(f_1), \ldots, I(f_K))\right)^{p}]\\
&= \ex\left[\left(\sum_{(k_1^1,\ldots), \ldots, (k_1^p, \ldots)} a_{1,k_1^1} \cdots a_{K, k_K^p} (I(f_1)^{\diamond k_1^1} \diamond \cdots I(f_K)^{\diamond k_K^1}) \cdots (I(f_1)^{\diamond k_1^p} \diamond \cdots I(f_K)^{\diamond k_K^p})\right)\right]\\
&= \sum_{(k_1^1,\ldots), \ldots, (k_1^p, \ldots) \in \N^K} a_{1,k_1^1} \cdots a_{K,k_K^p} \ex\left[(I(f_1)^{\diamond k_1^1} \diamond \cdots I(f_K)^{\diamond k_K^1}) \cdots (I(f_1)^{\diamond k_1^p} \diamond \cdots I(f_K)^{\diamond k_K^p})\right]\\
&\leq \sum_{(k_1^1,\ldots), \ldots, (k_1^p, \ldots)} \frac{C^{k_1^1}}{(k_1^1)!} \cdots \frac{C^{k_K^p}}{(k_K^p)!} \ex\left[(I(|f_1|)^{\diamond k_1^1} \diamond \cdots I(|f_K|)^{\diamond k_K^1}) \cdots (I(|f_1|)^{\diamond k_1^p} \diamond \cdots I(|f_K|)^{\diamond k_K^p})\right]\\
&=\ex\left[\left(\exp^{\diamond} \left(C \sum_i I(|f_i|)\right)\right)^p\right] = \exp\left(\frac{p(p-1)}{2} C^2 \|\sum_i |f_i|\|^2\right)<\infty.
\end{align*}
Thus an application of the Lyapunov inequality yields (i). 

(ii): Since Wick products of Wick-analytic functionals are Wick-analytic functionals as well, we conclude the assertion from (i) and the H\"older inequality.
\end{proof}

We will mostly deal with simpler Wick-analytic functionals where $f_i=\mathbf{1}_{[0,t_i]}$.

\begin{proposition}\label{prop_Wick_product_Gaussian_analytic}
Let  $F^{\diamond}(W_{t_1}, \ldots, W_{t_K})$ be a Wick-analytic functional. Then there exists a function $f^{\diamond}: [0,1]^K\times \R^{K} \rightarrow \R$ which gives the 
analytic representation via \eqref{eq_Wick_product_by_Wick_exp} as
\begin{align}\label{eq_simple_Wick_analytic_functional}
f^{\diamond}(t_1, \ldots, t_K,W_{t_1}, \ldots, W_{t_K}) = F^{\diamond}(W_{t_1}, \ldots, W_{t_K}). 
\end{align}
Let $(k)_m := k(k-1) \cdots (k-m+1)$ be the falling factorial, $t_i \in [0,1]$ and $m_i \in \N$. Then:
\begin{enumerate}
 \item $\dfrac{\partial^{\sum m_i}}{\partial x_1^{m_1} \cdots \partial x_K^{m_K}} W_{t_1}^{\diamond l_1} \diamond \cdots \diamond W_{t_K}^{\diamond l_K} = \left(\prod_{i=1}^{K} (k_i)_{m_i}\right) W_{t_1}^{\diamond l_1-m_1} \diamond \cdots \diamond W_{t_K}^{\diamond l_K-m_K}$.
\item $f^{\diamond} \in C^{\infty}([0,1]^K\times \R^{K},\R)$. In particular, all derivatives of Wick-analytic functionals 
$F^{\diamond}(W_{t_1}, \ldots, W_{t_K})$ are Wick-analytic as well.
\end{enumerate}
\end{proposition}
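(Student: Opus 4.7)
The plan is to establish the existence of the smooth representation $f^{\diamond}$ from the generating function description in Proposition \ref{prop_Wick_product_by_Wick_exp}, then derive (i) by direct differentiation of this generating function, and (ii) by term-by-term differentiation of the series defining $F^{\diamond}$, controlled by Cauchy-type estimates together with the growth condition on the coefficients $a_{i,k}$.

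First I would observe that each simple Wick product $W_{t_1}^{\diamond l_1}\diamond\cdots\diamond W_{t_K}^{\diamond l_K}$ equals $h^{\diamond}_{l_1,\ldots,l_K}(t_1,\ldots,t_K,W_{t_1},\ldots,W_{t_K})$ by Proposition \ref{prop_Wick_product_by_Wick_exp}, where, with $f_i=\mathbf{1}_{[0,t_i]}$,
\begin{equation*}
h^{\diamond}_{l_1,\ldots,l_K}(t,x)=\left.\frac{\partial^{\sum l_i}}{\partial a_1^{l_1}\cdots\partial a_K^{l_K}}\exp\Bigl(\sum a_i x_i-\tfrac{1}{2}\sum_{i,j}a_ia_j\min(t_i,t_j)\Bigr)\right|_{a=0}.
\end{equation*}
Expanding $F^{\diamond}(W_{t_1},\ldots,W_{t_K})$ by bilinearity, I would define $f^{\diamond}(t,x):=\sum_{k_1,\ldots,k_K}a_{1,k_1}\cdots a_{K,k_K}h^{\diamond}_{k_1,\ldots,k_K}(t,x)$ and verify that for each fixed $(t,x)$ this series converges, identifying \eqref{eq_simple_Wick_analytic_functional}.

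For (i), I would differentiate the generating function directly: $\partial_{x_i}$ brings down an $a_i$, and by the Leibniz rule $\partial_{a_i}^{l_i}(a_i\cdot g)\big|_{a_i=0}=l_i\,\partial_{a_i}^{l_i-1}g\big|_{a_i=0}$, so $\partial_{x_i}h^{\diamond}_{l_1,\ldots,l_K}=l_i\,h^{\diamond}_{l_1,\ldots,l_i-1,\ldots,l_K}$. Iterating gives $\partial_{x_1}^{m_1}\cdots\partial_{x_K}^{m_K}h^{\diamond}_{l_1,\ldots,l_K}=\prod_i(l_i)_{m_i}\,h^{\diamond}_{l_1-m_1,\ldots,l_K-m_K}$ (read as $0$ if some $m_i>l_i$), which is exactly the asserted formula (the symbol $k_i$ in the statement being $l_i$).

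For (ii), I would differentiate the series defining $f^{\diamond}$ term-by-term using (i). With $C:=\max_{i\leq K}\sup_k\sqrt[k]{k!|a_{i,k}|}<\infty$, the coefficient appearing in front of $h^{\diamond}_{j_1,\ldots,j_K}$ after shifting $k_i=j_i+m_i$ satisfies
\begin{equation*}
j_i!\,\bigl|a_{i,j_i+m_i}(j_i+m_i)_{m_i}\bigr|=(j_i+m_i)!\,|a_{i,j_i+m_i}|\leq C^{j_i+m_i},
\end{equation*}
so the differentiated series still has Wick-analytic-type coefficients, proving the final assertion. To show actual convergence of this series (and thus $f^{\diamond}\in C^{\infty}$), I would apply Cauchy's estimate to the generating function: for any $r>0$ and $(t,x)\in[0,1]^K\times[-R,R]^K$,
\begin{equation*}
|h^{\diamond}_{l_1,\ldots,l_K}(t,x)|\leq\prod_i\frac{l_i!}{r^{l_i}}\exp\bigl(KrR+\tfrac{1}{2}K^2r^2\bigr),
\end{equation*}
so choosing $r>C$ makes the derivative series absolutely and uniformly convergent on every compact set, justifying the interchange of summation and differentiation. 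The main obstacle I anticipate is performing the shift-of-index bookkeeping cleanly and verifying that the generating-function estimate combined with the Wick-analytic growth condition yields local uniform convergence of all derivatives simultaneously, rather than only for each fixed differentiation order.
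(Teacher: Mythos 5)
Your proposal is correct, and for part (ii) it takes a genuinely different route from the paper. The existence of $f^{\diamond}$ and part (i) are handled exactly as in the paper, directly from the generating function of Proposition \ref{prop_Wick_product_by_Wick_exp} (and your reading of $(k_i)_{m_i}$ as $(l_i)_{m_i}$ is the intended one). For (ii), however, you argue pointwise: Cauchy's estimate for the generating function, viewed as an entire function of $(a_1,\ldots,a_K)$, gives the uniform bound $|h^{\diamond}_{l_1,\ldots,l_K}(t,x)|\leq \bigl(\prod_i l_i!\,r^{-l_i}\bigr)\exp\bigl(KrR+\tfrac12 K^2r^2\bigr)$ on $[0,1]^K\times[-R,R]^K$, and combined with $k!\,|a_{i,k}|\leq C^k$ and any $r>C$ this makes each term-wise differentiated series absolutely and locally uniformly convergent, so smoothness in $x$ follows from the classical theorem on differentiating series; note that treating one fixed differentiation order at a time, inductively, suffices, so the issue you flag about controlling all orders simultaneously is not actually an obstacle. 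The paper argues instead in $L^2(\Omega)$: it bounds the second moments of the term-wise differentiated Wick series via \eqref{eq_Wick_product_of_Gaussian_inner_product} and the coefficient bound (mirroring the moment computation of Proposition \ref{prop_Wick_analytic_functionals_nice_properties}), obtains absolute convergence of these $L^2$ norms, and then invokes the continuous density of $(W_{t_1},\ldots,W_{t_K})$ to transfer this to the analytic representation $f^{\diamond}$. Your version buys explicit, deterministic control of $f^{\diamond}$ and all its $x$-derivatives on compacts, with no transfer from $L^2$ statements needed, while the paper's version stays entirely inside the Wick/$L^2$ toolbox used throughout, at the cost of a rather terse final density argument. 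One caveat applies equally to both arguments: only the $x$-derivatives are treated explicitly, and smoothness in the $t_i$ (where $\min(t_i,t_j)$ enters the generating function, which is not smooth across coincident time points) is left implicit and really presupposes a fixed ordering of the time points; within that regime your Cauchy-estimate framework extends to the $t$-derivatives just as well.
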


\begin{proof}
The existence of $f^{\diamond}$ and (i) \ follow directly by \eqref{eq_Wick_product_by_Wick_exp}. Due to (i), the assertion (ii) is true for all finite chaoses. Similarly to Proposition \ref{prop_Wick_analytic_functionals_nice_properties} (i), we have
\begin{align*}
\displaybreak[0]
0&\leq \ex\left[\left(\dfrac{\partial^{\sum m_i}}{\partial x_1^{m_1} \cdots \partial x_K^{m_K}} F^{\diamond}(W_{t_1}, \ldots, W_{t_K})\right)^{2}\right]\\
&= \sum_{\substack{(k_1^1,\ldots), (k_1^2, \ldots) \in \N^K\\ k_i^1, k_i^2 \geq m_i}} a_{1,k_1^1} \, a_{1,k_1^2} \cdots a_{K,k_K^1} \, a_{K,k_K^2} \prod_{i=1}^{K} (k_i^1)_{m_i} (k_i^2)_{m_i} \\
&\qquad \cdot \ex\left[(W_{t_1}^{\diamond l_1-m_1} \diamond \cdots \diamond W_{t_K}^{\diamond l_K-m_K}) ( W_{t_1}^{\diamond l_1-m_1} \diamond \cdots \diamond W_{t_K}^{\diamond l_K-m_K})\right]\\
&\leq \sum_{\substack{(k_1^1,\ldots), (k_1^2, \ldots) \in \N^K\\ k_i^1, k_i^2 \geq m_i}} \frac{C^{k_1^1+k_1^2}t_1^{k_1^1+k_1^2-2m_1}}{(k_1^1-m_1)!(k_1^2-m_1)!} \cdots \frac{C^{k_K^1+k_K^2}t_K^{k_K^1+k_K^2-2m_K}}{(k_K^1-m_K)!(k_K^2-m_K)!}\\
&=\left(\sum_{\substack{(k_1,\ldots)\in \N^K\\ k_i\geq m_i}}  \frac{C^{k_1}t_1^{k_1-m_1}}{(k_1-m_1)!} \cdots \frac{C^{k_K}t_K^{k_K-m_K}}{(k_K-m_K)!}\right)^2 < \infty,
\end{align*}
i.e. the $L^2$ norms of all derivatives of $F^{\diamond}(W_{t_1}, \ldots, W_{t_K})$ are absolutely convergent. Since the multivariate Gaussian distribution $(W_{t_1}, \ldots, W_{t_K})$ has a continuous density on $\R^K$, we obtain the absolute convergence of the analytic representation $f^{\diamond}$ and this completes the assertion (ii). 
\end{proof}

%
%

\begin{remark}\label{remark_f_diamond_smoothness}
Suppose $f \in C^{1;2,\ldots, 2}([0,1]\times \R^{K})$ and fixed  $\tau_2, \ldots, \tau_K \in [0,1]$. Then, by Proposition \ref{prop_Wick_analytic_functionals_nice_properties} (ii), the representation 
\[
f(t,W_{t}, W_{\tau_2},\ldots, W_{\tau_K}) = f^{\diamond}(t, \tau_2, \ldots, \tau_K,W_{t}, W_{\tau_2},\ldots, W_{\tau_K}) 
\]
satisfies in particular $f^{\diamond} \in C^{1,\ldots, 1; 2,\ldots, 2}([0,1]^K\times \R^{K})$.
\end{remark}

A direct consequence of Proposition \ref{prop_Wickexp_properties} (iv) and the Wiener chaos decomposition is the totality of Wick-analytic functionals in $L^2(\Omega)$. In particular, we have:

\begin{proposition}\label{prop_square_integrable_Wick_analytic_approx}
Suppose $f:\R^K \rightarrow \R$ with $f(W_{t_1}, \ldots, W_{t_K}) \in L^2(\Omega)$ for some fixed $t_i \in [0,1]$. Then there exists a sequence of Wick-analytic functionals $F^{\diamond}_m$ with 
\[
F^{\diamond}_m(W_{t_1}, \ldots, W_{t_K}) \rightarrow  f(W_{t_1}, \ldots, W_{t_K}) \quad in \ L^2(\Omega) \ as \ m\rightarrow \infty.
\]
\end{proposition}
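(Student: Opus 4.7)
My plan is to derive the statement from the totality of Wick exponentials in $L^2(\Omega)$ (Proposition \ref{prop_Wickexp_properties}(iv)), after a conditional-expectation projection onto the sub-$\sigma$-algebra $\mathcal G:=\sigma(W_{t_1},\ldots,W_{t_K})$.

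First, I observe that every Wick exponential $\exp^{\diamond}(I(g))$ is itself a Wick-analytic functional: by \eqref{eq_Wick_exp_by_Wick_powers} it equals $\sum_{k\geq 0}\frac{1}{k!}I(g)^{\diamond k}$, which matches Definition \ref{def_Wick_analytic_functionals} with $K=1$, $f_1=g$, $a_{1,k}=1/k!$, and the required bound $\sqrt[k]{k!\cdot 1/k!}=1$. Proposition \ref{prop_Wickexp_properties}(iv) then supplies a sequence of finite linear combinations $Y_m=\sum_{j=1}^{N_m}b_{m,j}\exp^{\diamond}(I(h_{m,j}))$ with $Y_m\to f(W_{t_1},\ldots,W_{t_K})$ in $L^2(\Omega)$.

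Next, I project onto $\mathcal G$. Conditional expectation is an $L^2$-contraction and $f(W_{t_1},\ldots,W_{t_K})$ is $\mathcal G$-measurable, so $\tilde Y_m:=\ex[Y_m\mid\mathcal G]\to f(W_{t_1},\ldots,W_{t_K})$ in $L^2$. Let $P$ denote the orthogonal projection of $L^2([0,1])$ onto $V:=\operatorname{span}(\mathbf{1}_{[0,t_1]},\ldots,\mathbf{1}_{[0,t_K]})$; writing $h=Ph+(h-Ph)$ with $h-Ph\perp V$, so that $I(h-Ph)$ is independent of $\mathcal G$, and using $\|h\|^2=\|Ph\|^2+\|h-Ph\|^2$ together with $\ex[\exp^{\diamond}(I(h-Ph))]=1$, a short computation gives $\ex[\exp^{\diamond}(I(h))\mid\mathcal G]=\exp^{\diamond}(I(Ph))$. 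Therefore $\tilde Y_m=\sum_{j=1}^{N_m}b_{m,j}\exp^{\diamond}(I(Ph_{m,j}))$ with each $Ph_{m,j}=\sum_{i=1}^{K}c_{m,j,i}\mathbf{1}_{[0,t_i]}\in V$.

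Finally, by the functional equation of Proposition \ref{prop_Wick_product} each Wick exponential factorizes as $\exp^{\diamond}(I(Ph_{m,j}))=\exp^{\diamond}(c_{m,j,1}W_{t_1})\diamond\cdots\diamond\exp^{\diamond}(c_{m,j,K}W_{t_K})$, i.e.\ a Wick product of analytic series in the individual $W_{t_i}$; in particular it is Wick-analytic in $(W_{t_1},\ldots,W_{t_K})$ in the sense of \eqref{eq_Wick_analytic_functional} with coefficients $a_{i,k}=c_{m,j,i}^{k}/k!$ satisfying $\sqrt[k]{k!\,|a_{i,k}|}=|c_{m,j,i}|<\infty$. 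Setting $F^{\diamond}_m:=\tilde Y_m$ yields the claim. The one point I expect to need interpretation is that $F^{\diamond}_m$ is a finite $\mathbb R$-linear combination of tensor-product Wick-analytic functionals of $(W_{t_1},\ldots,W_{t_K})$; this matches the sentence preceding the proposition, where the \emph{totality of Wick-analytic functionals} is to be read as density of their linear span in $L^2(\Omega)$.
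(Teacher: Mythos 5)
Your argument is correct, but it takes a different route from the one the paper indicates. The paper presents the proposition as an immediate consequence of the Wiener chaos decomposition in the form of Remark \ref{remark_Wiener_chaos_by_Wick_products}: the Wick products of $W_{t_1}, W_{t_2}-W_{t_1},\ldots, W_{t_K}-W_{t_{K-1}}$ form an orthogonal basis of the space of square integrable $\sigma(W_{t_1},\ldots,W_{t_K})$-measurable random variables, so truncating the resulting expansion of $f(W_{t_1},\ldots,W_{t_K})$ at chaos level $m$ already gives the approximating sequence, and the approximants are finite-chaos polynomials, which is exactly the form exploited in the later finite-chaos arguments (e.g.\ in Theorems \ref{thm_Ito_formula_Skorohod} and \ref{thm_optimal_approx_simple_Skorohod}). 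You instead start from the totality of Wick exponentials (Proposition \ref{prop_Wickexp_properties} (iv)), project onto $\mathcal G=\sigma(W_{t_1},\ldots,W_{t_K})$, and use the explicit identity $\ex[\exp^{\diamond}(I(h))\mid\mathcal G]=\exp^{\diamond}(I(Ph))$ together with the functional equation of Proposition \ref{prop_Wick_product}; this is self-contained (it does not invoke the orthogonal-basis fact from \cite{Janson}), and it produces very explicit approximants, namely finite linear combinations of Wick exponentials of linear combinations of the $W_{t_i}$, at the price of a slightly longer computation. Your closing caveat is the right one, and it applies equally to both routes: neither a truncated chaos sum nor your $\tilde Y_m$ is a single product-form functional in the strict sense of \eqref{eq_Wick_analytic_functional}, so the proposition must be read (as the paper does, cf.\ the representation \eqref{eq_thm_exact_simulation_is_Wick_analytic1} and the sentence on totality preceding the proposition) as convergence of elements of the linear span of Wick-analytic functionals; since Proposition \ref{prop_Wick_conditional_expectation} and the substitution $W_{t_i}\mapsto W_{t_i}^{\lin}$ are applied termwise and linearly in Theorem \ref{thm_cond_exp_Skorohod_as_Wick_product}, this broader reading is harmless for the subsequent use of the proposition.
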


The enormous advantage of the Wick product is that it is preserved by conditional expectation. This is a direct consequence of \cite[Corollary 9.4]{Janson} or \cite[Lemma 6.20]{Di_Nunno_und_so}:

\begin{proposition}\label{prop_Wick_conditional_expectation}
For $X,Y, X\diamond Y \in L^2(\Omega)$ and the sub-$\sigma$-field $\mathcal{G} \subseteq \mathcal{F}$:
$$
\ex[X\diamond Y|\mathcal{G}] = \ex[X|\mathcal{G}]\diamond \ex[Y|\mathcal{G}].
$$ 
\end{proposition}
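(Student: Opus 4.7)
The plan is to establish the identity first on Wick exponentials (which are total in $L^2(\Omega)$ by Proposition \ref{prop_Wickexp_properties}(iv)) and then pass to general $X,Y$ via the Wiener chaos decomposition. The key structural observation is that, since $\mathcal{F}$ is generated by the underlying Gaussian Hilbert space, the sub-$\sigma$-field $\mathcal{G}$ of interest corresponds to a closed subspace $H\subseteq L^2([0,1])$ with orthogonal projector $P_H$: the Gaussians measurable with respect to $\mathcal{G}$ are precisely $\{I(h):h\in H\}$. For any $f\in L^2([0,1])$, the orthogonal decomposition $I(f)=I(P_Hf)+I(f-P_Hf)$ splits $I(f)$ into a $\mathcal{G}$-measurable piece and a Gaussian independent of $\mathcal{G}$, and Pythagoras together with the renormalization \eqref{eq_Wick_exp} yield
\[
\exp^{\diamond}(I(f))=\exp^{\diamond}(I(P_Hf))\cdot \exp^{\diamond}(I(f-P_Hf)).
\]
Since the second factor has $\mathcal{G}$-conditional expectation equal to $\ex[\exp^{\diamond}(I(f-P_Hf))]=1$ by independence and Proposition \ref{prop_Wickexp_properties}(iii), I obtain the fundamental formula $\ex[\exp^{\diamond}(I(f))\mid\mathcal{G}]=\exp^{\diamond}(I(P_Hf))$.

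Combining this with the functional equation of Proposition \ref{prop_Wick_product}, the identity on a pair of Wick exponentials follows directly:
\[
\ex[\exp^{\diamond}(I(f))\diamond\exp^{\diamond}(I(g))\mid\mathcal{G}]=\ex[\exp^{\diamond}(I(f+g))\mid\mathcal{G}]=\exp^{\diamond}(I(P_Hf+P_Hg))=\exp^{\diamond}(I(P_Hf))\diamond\exp^{\diamond}(I(P_Hg)).
\]
By linearity this extends to finite linear combinations. For general $X,Y\in L^2(\Omega)$ with $X\diamond Y\in L^2(\Omega)$, I would expand in multiple Wiener integrals $X=\sum_n I_n(\tilde f_n)$, $Y=\sum_m I_m(\tilde g_m)$ and use the two chaos-level facts that are consequences of the Wick-exponential computation above: (a) $\ex[I_n(\tilde f_n)\mid\mathcal{G}]=I_n(P_H^{\otimes n}\tilde f_n)$, obtained by expanding $\exp^{\diamond}(I(\lambda f))$ in powers of $\lambda$ and matching coefficients, and (b) $I_n(\tilde f_n)\diamond I_m(\tilde g_m)=I_{n+m}(\tilde f_n\,\hat{\otimes}\,\tilde g_m)$. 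Since $P_H^{\otimes(n+m)}$ factorizes on symmetric tensor products as $P_H^{\otimes n}\otimes P_H^{\otimes m}$, a term-by-term comparison in each chaos $H^{:k:}$ gives the stated equality.

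The main obstacle is that the Wick product is not jointly continuous on $L^2(\Omega)$: it is only closed on $\mathcal{D}^{\diamond}$ by Proposition \ref{prop_Wick_product}, so one cannot argue by approximating $X,Y$ in $L^2$ by Wick-exponential combinations and naively passing to the limit. The hypothesis $X\diamond Y\in L^2(\Omega)$ is precisely what circumvents this: it guarantees the $L^2$-convergence of $\sum_k\pi_k(X\diamond Y)$, so that the chaos-by-chaos identity assembles into the full statement without invoking any continuity of $\diamond$. This is also where the identification of $\mathcal{G}$ with the Gaussian subspace $H$ matters, since it is only under that assumption that each chaos space $H^{:n:}$ is invariant under $\ex[\cdot\mid\mathcal{G}]$, making the chaos-wise reduction possible.
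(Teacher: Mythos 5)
The paper itself does not prove this proposition; it simply cites \cite[Corollary 9.4]{Janson} and \cite[Lemma 6.20]{Di_Nunno_und_so}. Your argument is a genuine from-scratch proof and is essentially the standard one behind those references: split $I(f)=I(P_Hf)+I(f-P_Hf)$, use Pythagoras to factor the Wick exponential, get $\ex[\exp^{\diamond}(I(f))\mid\mathcal{G}]=\exp^{\diamond}(I(P_Hf))$, and transfer this to general $X,Y$ through the chaos decomposition. Two remarks. First, the hypothesis you slip in --- that $\mathcal{G}$ is generated by the Gaussian family $\{I(h):h\in H\}$ for a closed subspace $H\subseteq L^2([0,1])$ --- is \emph{not} a consequence of $\mathcal{F}$ being generated by the Gaussian Hilbert space: an arbitrary sub-$\sigma$-field need not be of this form, and the identity is actually false without it (take $X=Y=W_1$ and $\mathcal{G}=\sigma(W_1^2)$: then $\ex[X\diamond Y\mid\mathcal{G}]=W_1^2-1$ while $\ex[X\mid\mathcal{G}]=0$). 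So your restriction is not a convenience but the correct hypothesis; it is also what the cited sources assume and all the paper ever uses ($\mathcal{G}=\sigma(\P_n)$ or $\sigma(W_1,W_{\tau_2},\ldots,W_{\tau_K})$), but you should state it as an assumption rather than derive it.

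Second, the final assembly can be tightened. Your step (a) only comes out of the exponential expansion for kernels $f^{\otimes n}$ and needs polarization/density to cover general symmetric kernels, and your chaos-by-chaos comparison must also certify that the pair $(\ex[X\mid\mathcal{G}],\ex[Y\mid\mathcal{G}])$ lies in $\mathcal{D}^{\diamond}$, since the statement implicitly asserts existence of the right-hand Wick product. Both points are handled at once, without multiple Wiener integrals, by your own key formula: for $\mathcal{G}$-measurable $A$ one has $\ex[A\exp^{\diamond}(I(g))]=\ex[A\exp^{\diamond}(I(P_Hg))]$, hence $S(\ex[V\mid\mathcal{G}])(g)=(SV)(P_Hg)$ for every $V\in L^2(\Omega)$. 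Applying this with $V=X\diamond Y$ gives $S(\ex[X\diamond Y\mid\mathcal{G}])(g)=(SX)(P_Hg)\,(SY)(P_Hg)=S(\ex[X\mid\mathcal{G}])(g)\,S(\ex[Y\mid\mathcal{G}])(g)$, so $Z:=\ex[X\diamond Y\mid\mathcal{G}]\in L^2(\Omega)$ is exactly the witness required in Proposition \ref{prop_Wick_product}, proving both existence and the equality in one stroke, and making the non-continuity of $\diamond$ a non-issue. With the hypothesis on $\mathcal{G}$ made explicit, your proof is correct; this S-transform shortcut merely shortens it.
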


We consider the equidistant discretization of the underlying Brownian motion
$$
\P_n:= \{W_{\frac{1}{n}}, W_{\frac{2}{n}}, \ldots, W_1\}
$$
and the linear interpolation with respect to this discretization, i.e.
$$
W_t^{\lin} := W_{i/n} + n(t-i/n)(W_{(i+1)/n}-W_{i/n}),
$$
if  $i/n \leq t \leq (i+1)/n$. We clearly have $W_t^{\lin} = \ex[W_t|\P_n]$. Due to Proposition \ref{prop_S_transform_properties} (iii) and Proposition \ref{prop_Wick_conditional_expectation}, we have for Wick analytic functionals,
\begin{equation}\label{eq_cond_exp_simple_Skorohod_integral}
\ex[\int_{0}^{1}F^{\diamond}(W_{t_1}, \ldots, W_{t_K}) dW_s|\P_n] = F^{\diamond}(W_{t_1}^{\lin}, \ldots, W_{t_K}^{\lin}) \diamond W_1,
\end{equation}
if the Skorohod integral and the right hand side exist in $L^2(\Omega)$. Moreover, we have

\begin{theorem}\label{thm_cond_exp_Skorohod_as_Wick_product}
Suppose $f:\R^K \rightarrow \R$ with $f(W_{t_1}, \ldots, W_{t_K}) \in L^2(\Omega)$ for some fixed $t_i \in [0,1]$ and a (Skorohod integrable) process  $u\in L^2(\Omega \times [0,1])$. Then
\[
\ex[\int_{0}^{1}f(W_{t_1}, \ldots, W_{t_K}) \diamond u_s dW_s|\P_n] = f(W_{t_1}^{\lin}, \ldots, W_{t_K}^{\lin}) \diamond \ex[\int_{0}^{1} u_s dW_s|\P_n], 
\]
if both sides exist in $L^2(\Omega)$. In particular, \eqref{eq_cond_exp_simple_Skorohod_integral} is true for random variables $f(W_{t_1}, \ldots, W_{t_K}) \in L^2(\Omega)$ as well.
\end{theorem}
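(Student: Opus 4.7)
The natural route is an S-transform argument. Both sides are $\sigma(\P_n)$-measurable --- the left-hand side by construction, and the right-hand side because the Wick product of two elements of $L^2(\sigma(\P_n))$ remains in $L^2(\sigma(\P_n))$ (the chaos decomposition of the Gaussian subspace generated by $\P_n$ is preserved by $\diamond$) --- so by injectivity of $S$ it suffices to match their S-transforms on the total subset $\{\exp^{\diamond}(I(g)) : g \text{ is a step function on the grid } [(i-1)/n, i/n]\}$ of $L^2(\sigma(\P_n))$, total by Proposition \ref{prop_Wickexp_properties}(iv).

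For such a step $g$ the random variable $\exp^{\diamond}(I(g))$ is itself $\sigma(\P_n)$-measurable, so by the tower property $S(\ex[X|\P_n])(g) = (SX)(g)$ for every $X \in L^2(\Omega)$. Combined with Proposition \ref{prop_S_transform_properties}(ii) and the defining multiplicativity $S(X \diamond Y) = (SX)(SY)$ of the S-transform on Wick products, this lets me compute both $S(\mathrm{LHS})(g)$ and $S(\mathrm{RHS})(g)$ and see that they both reduce to $(Sf)(g) \cdot S(\int_0^1 u_s\, dW_s)(g)$, provided the auxiliary identity $S(f(W^{\lin}))(g) = (Sf)(g)$ holds for such step $g$.

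Legitimising the symbol $f(W^{\lin})$ for a general, non-Wick-analytic $f$ is the main obstacle. My plan is to invoke Proposition \ref{prop_square_integrable_Wick_analytic_approx} to produce Wick-analytic functionals $F_m^{\diamond}(W) \to f(W)$ in $L^2(\Omega)$, define $f(W^{\lin}) := \lim_m F_m^{\diamond}(W^{\lin})$ in $L^2(\Omega)$, and verify independence of the choice of approximants as follows: for each $F_m^{\diamond}$, iterated application of Proposition \ref{prop_Wick_conditional_expectation} on the Wick expansion \eqref{eq_Wick_product_by_Wick_exp} (together with $\ex[W_{t_i}|\P_n]=W_{t_i}^{\lin}$) gives $\ex[F_m^{\diamond}(W)|\P_n] = F_m^{\diamond}(W^{\lin})$, so the limit coincides with $\ex[f(W)|\P_n]$ by $L^2$-continuity of conditional expectation. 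The auxiliary identity $S(f(W^{\lin}))(g) = (Sf)(g)$ for step $g$ then drops out from the $\P_n$-measurability of $\exp^{\diamond}(I(g))$. Crucially, because the Wick product fails to be continuous on $L^2(\Omega)$, this passage to the limit must happen at the S-transform level rather than directly on the Wick-product identity; once this is in place, the \emph{in particular} clause extending \eqref{eq_cond_exp_simple_Skorohod_integral} from Wick-analytic to general square-integrable $f$ is just the specialisation $u_s \equiv 1$.
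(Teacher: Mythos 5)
Your S-transform skeleton (testing against Wick exponentials of grid step functions, tower property, multiplicativity of $S$ on Wick products) is sound and is essentially the device the paper also uses to finish; the side points you leave open (totality of grid-step Wick exponentials in $L^2(\sigma(\P_n))$, $\sigma(\P_n)$-measurability of the right-hand side) are true and only need a line. The genuine gap is in how you treat $f(W_{t_1}^{\lin}, \ldots, W_{t_K}^{\lin})$. You \emph{define} it as $\lim_m F^{\diamond}_m(W_{t_1}^{\lin},\ldots,W_{t_K}^{\lin})$ and then identify that limit with $\ex[f(W_{t_1},\ldots,W_{t_K})|\P_n]$; with that reading your auxiliary identity $S(f(W^{\lin}))(g)=(Sf(W))(g)$ is immediate from the tower property, but what you have then proved is only $\ex[\int_0^1 f(W)\diamond u_s\,dW_s|\P_n]=\ex[f(W)|\P_n]\diamond\ex[\int_0^1 u_s\,dW_s|\P_n]$, i.e.\ Proposition \ref{prop_Wick_conditional_expectation} combined with $L^2$-continuity of conditional expectation. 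The substance of the theorem is the further identification of this object with the concrete plug-in of the Borel function $f$ at the interpolated points, and that identification is not automatic: the Wick powers $(W^{\lin}_{t_i})^{\diamond k}$ occurring in $F^{\diamond}_m(W^{\lin})$ are Hermite polynomials with variance parameter $\ex[(W^{\lin}_{t_i})^2]<t_i$, so $F^{\diamond}_m(W^{\lin})$ is \emph{not} simply the analytic representation of $F^{\diamond}_m$ (taken with respect to $W$) evaluated at $W^{\lin}$; compare $\ex[W_t^2\mid \P_n]=(W^{\lin}_t)^2+\ex[(W_t-W^{\lin}_t)^2]$, which shows how plug-in and conditional expectation drift apart when this point is left uncontrolled. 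In short, your definitional move changes the statement into an essentially tautological one and skips the step that carries the content.

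The paper's proof spends its effort exactly on that missing step: from \eqref{eq_cov_W_lin} it deduces $\ex[W_s^{\lin}W_t^{\lin}]\le s\wedge t=\ex[W_sW_t]$, hence that the law of $(W^{\lin}_{t_1},\ldots,W^{\lin}_{t_K})$ is absolutely continuous with respect to the law of $(W_{t_1},\ldots,W_{t_K})$ with a bounded Radon--Nikodym density; this transfers the $L^2$-convergence $F^{\diamond}_m\to f$ from the law of $(W_{t_1},\ldots,W_{t_K})$ to the law of $(W^{\lin}_{t_1},\ldots,W^{\lin}_{t_K})$ and thereby identifies the limit of $F^{\diamond}_m(W^{\lin})$ with $f(W^{\lin})$ as it appears in the statement; only afterwards does the paper pass to the limit at the S-transform level with Cauchy--Schwarz and conclude by injectivity, as in your sketch. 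To repair your proposal you must import this absolute-continuity/bounded-density argument (and address the representation issue with the Hermite variance parameters); without it, the ``auxiliary identity'' for the genuine plug-in $f(W^{\lin})$ is precisely what remains unproven.
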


\begin{proof}
For Wick-analytic $F^{\diamond}(W_{t_1}, \ldots, W_{t_K})$, thanks to Proposition \ref{prop_S_transform_properties} (iii) and Proposition \ref{prop_Wick_conditional_expectation}, we have
\[
\ex[\int_{0}^{1}F^{\diamond}(W_{t_1}, \ldots, W_{t_K}) \diamond u_s dW_s|\P_n] = \ex[F^{\diamond}(W_{t_1}, \ldots, W_{t_K})|\P_n] \diamond \ex[\int_{0}^{1} u_s dW_s|\P_n].
\]
Otherwise, let $(F^{\diamond}_m(W_{t_1}, \ldots, W_{t_K}))_{m \geq 1}$ be the sequence of Wick-analytic functionals from Proposition \ref{prop_square_integrable_Wick_analytic_approx}. Due to Proposition \ref{prop_Wick_conditional_expectation}, we have
\begin{equation}\label{eq_thm_cond_exp_Skorohod_as_Wick_product1}
 \ex[F^{\diamond}_m(W_{t_1}, \ldots, W_{t_K})|\P_n]=F^{\diamond}_m(W_{t_1}^{\lin}, \ldots, W_{t_K}^{\lin}).
\end{equation}

For all $s,t \in [0,1]$, we observe
\begin{align}
\ex[W_s^{\lin} W_t^{\lin}] 
&=(s\wedge t)\eins_{\{\lfloor ns\rfloor \neq \lfloor nt\rfloor\}} + \left(\lfloor nt\rfloor/n + n(t-\lfloor nt\rfloor/n)(s-\lfloor nt\rfloor/n)\right)\eins_{\{\lfloor ns\rfloor=\lfloor nt\rfloor\}}
\label{eq_cov_W_lin}
\end{align}
and thus in particular
\begin{equation*}
\ex[W_s^{\lin} W_t^{\lin}] \leq s\wedge t = \ex[W_s W_t]. 
\end{equation*}
This gives that the measure of the multivariate Gaussian distributed $(W_{t_1}^{\lin}, \ldots, W_{t_K}^{\lin})$ is absolutely continuous with respect to the measure of $(W_{t_1}, \ldots, W_{t_K})$ with a bounded Radon-Nikodym density. Hence, the convergence in Proposition \ref{prop_square_integrable_Wick_analytic_approx} implies
$F^{\diamond}_m(W_{t_1}^{\lin}, \ldots, W_{t_K}^{\lin}) \rightarrow f(W_{t_1}^{\lin}, \ldots, W_{t_K}^{\lin})$ in $L^2(\Omega)$. Thus, by \eqref{eq_thm_cond_exp_Skorohod_as_Wick_product1} and the Cauchy-Schwarz inequality, we obtain for all $h \in L^2([0,1])$,
\begin{align*}
\displaybreak[0]
&\left|S\left (\ex[f(W_{t_1}, \ldots, W_{t_K})|\P_n] - f(W_{t_1}^{\lin}, \ldots, W_{t_K}^{\lin})\right)(h)\right|\\ 
&\leq \left|(S\left (\ex[f(W_{t_1}, \ldots, W_{t_K})|\P_n] - \ex[F^{\diamond}_m(W_{t_1}, \ldots, W_{t_K})|\P_n]\right)(h)\right|\\
&\quad + \left|(S \left(F^{\diamond}_m(W_{t_1}^{\lin}, \ldots, W_{t_K}^{\lin}) - f(W_{t_1}^{\lin}, \ldots, W_{t_K}^{\lin})\right)(h)\right|\\
&\leq \ex[(f(W_{t_1}, \ldots, W_{t_K})-F^{\diamond}_m(W_{t_1}, \ldots, W_{t_K})|\P_n])^2]^{1/2}\ex[(\exp^{\diamond}(I(h)))^2]^{1/2}\\
&\quad + \ex[(F^{\diamond}_m(W_{t_1}^{\lin}, \ldots, W_{t_K}^{\lin})-f(W_{t_1}^{\lin}, \ldots, W_{t_K}^{\lin}))^2]^{1/2}\ex[(\exp^{\diamond}(I(h)))^2]^{1/2} \rightarrow 0,
\end{align*}
as $m$ tends to infinity. Hence, by the injectivity of the S-transform, we conclude 
\[
\ex[f(W_{t_1}, \ldots, W_{t_K})|\P_n] \diamond \ex[\int_{0}^{1} u_s dW_s|\P_n] =  f(W_{t_1}^{\lin}, \ldots, W_{t_K}^{\lin}) \diamond \ex[\int_{0}^{1} u_s dW_s|\P_n].
\]
\end{proof}


We will need the following It\^o formula for Skorohod integrals. The proof is paradigmatic for further results, moreover it explains the appearance and importance of $f^{\diamond}$.

\begin{theorem}\label{thm_Ito_formula_Skorohod}
Suppose $K \in \N$, $f \in C^{1;2,\ldots, 2}([0,1]\times \R^{K})$ and fixed  $\tau_2, \ldots, \tau_K \in [0,1]$. Then $\left(\dfrac{\partial}{\partial x_1}f(t,W_{t}, W_{\tau_2},\ldots, W_{\tau_K})\right)_{t \in [0,1]}$ is Skorohod integrable and 
\begin{align}
&f(1,W_{1}, W_{\tau_2},\ldots, W_{\tau_K}) - f(0,W_{0}, W_{\tau_2},\ldots, W_{\tau_K})\nonumber\\ &=\int_{0}^{1}\dfrac{\partial}{\partial x_1} f^{\diamond}(t, \tau_2\ldots, \tau_K,W_{t}, \ldots, W_{\tau_K}) dW_{t} + \int_{0}^{1} \mathcal{L} f^{\diamond}(t, \tau_2\ldots, \tau_K,W_{t}, \ldots, W_{\tau_K})dt,\label{eq_thm_Ito_formula_Skorohod}
\end{align}
with $f^{\diamond} \in C^{1,\ldots, 1; 2,\ldots, 2}([0,1]^K\times \R^{K})$ from Remark \ref{remark_Wiener_chaos_by_Wick_products} and Remark \ref{remark_f_diamond_smoothness}
and the differential operator $\mathcal{L}$ is defined as
\begin{equation}\label{eq_L_operator}
\mathcal{L}:= \left(\sum_{1\leq k \leq K}\dfrac{\partial}{\partial t_k} + \frac{1}{2} \sum_{1\leq k,l \leq K} \dfrac{\partial^2}{\partial x_k \partial x_l}\right).  
\end{equation}
\end{theorem}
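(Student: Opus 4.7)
The plan is to verify the formula first for a single Wick monomial $f(t, W_t, W_{\tau_2}, \ldots, W_{\tau_K}) = a(t)\, W_t^{\diamond l_1} \diamond W_{\tau_2}^{\diamond l_2} \diamond \cdots \diamond W_{\tau_K}^{\diamond l_K}$ with smooth coefficient $a$, and then to extend by linearity to Wick-analytic functionals and by density to the general smooth $f$. For such a monomial, Proposition~\ref{prop_Wick_product_Gaussian_analytic} gives the representation $f^{\diamond}(t_1, \ldots, t_K, x_1, \ldots, x_K) = a(t_1)\, h^{\diamond}(t_1, \ldots, t_K, x_1, \ldots, x_K)$, where $h^{\diamond}$ arises as an iterated $a_i$-derivative at zero of $\exp\bigl(\sum_i a_i x_i - \tfrac{1}{2}\sum_{k,l} a_k a_l (t_k \wedge t_l)\bigr)$. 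The central deterministic identity is $\mathcal{L}\, h^{\diamond} \equiv 0$ almost everywhere: applying $\mathcal{L}$ to the generating exponential and using $\partial_{t_k}(t_k \wedge t_l) = \eins_{\{t_k < t_l\}}$ for $k \neq l$ produces the algebraic prefactor $-\tfrac{1}{2}\sum_k a_k^2 - \sum_{k<l} a_k a_l + \tfrac{1}{2}(\sum_k a_k)^2$, which vanishes off the measure-zero kink set $\{t_k = t_l\}$. Consequently $\mathcal{L}(a(t_1) h^{\diamond}) = a'(t_1) h^{\diamond}$.

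On the stochastic side, Proposition~\ref{prop_S_transform_properties}(iii) extracts the fixed Wick factors $W_{\tau_2}^{\diamond l_2} \diamond \cdots \diamond W_{\tau_K}^{\diamond l_K}$ from the Skorohod integral, reducing it to $\int_0^1 a(t)\, l_1\, W_t^{\diamond (l_1-1)} dW_t$; combining the one-dimensional Wick-It\^o identity $W_t^{\diamond l_1} = l_1 \int_0^t W_s^{\diamond (l_1-1)} dW_s$ (immediate from the S-transform definition) with an integration by parts against $a$ then reproduces both sides of \eqref{eq_thm_Ito_formula_Skorohod} in the monomial case. Step two extends the formula to Wick-analytic functionals $F^{\diamond}$ by bilinearity of $\diamond$, the absolute convergence bounds from the proof of Proposition~\ref{prop_Wick_analytic_functionals_nice_properties} justifying the interchange of the Wick series with the Skorohod integral and with $\mathcal{L}$. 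Step three handles a general $f \in C^{1;2,\ldots,2}$: Proposition~\ref{prop_square_integrable_Wick_analytic_approx} delivers Wick-analytic $F^{\diamond}_m \to f$ in $L^2(\Omega)$, and combining the $C^{1;2,\ldots,2}$-smoothness of $f^{\diamond}$ granted by Remark~\ref{remark_f_diamond_smoothness} with the Skorohod-integrability criterion $\sum_k (k+1)\|\pi_k(\cdot)\|^2 < \infty$ upgrades this to $L^2(\Omega \times [0,1])$-convergence of the integrand $\partial_{x_1} F^{\diamond}_m \to \partial_{x_1} f^{\diamond}$ and of the drift $\mathcal{L} F^{\diamond}_m \to \mathcal{L} f^{\diamond}$. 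Passing to the $L^2(\Omega)$-limit of the monomial-level identity yields \eqref{eq_thm_Ito_formula_Skorohod}.

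The main obstacle I expect is the vanishing $\mathcal{L}\, h^{\diamond} \equiv 0$ underlying Step one, since this cancellation is precisely what selects $f^{\diamond}$ rather than $f$ in the drift and requires careful bookkeeping at the kink set of $t_k \wedge t_l$; once this identity is in hand, the remainder of Step one is a direct computation. A secondary difficulty is the final limit passage, where Proposition~\ref{prop_square_integrable_Wick_analytic_approx} only supplies bare $L^2$-convergence of the random variable itself, so one must separately control the $L^2(\Omega \times [0,1])$-convergence of $\partial_{x_1}$ and $\mathcal{L}$ applied to the approximants — the precise point at which the smoothness and growth hypotheses on $f$ become indispensable.
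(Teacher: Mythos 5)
Your Step one is essentially the paper's argument: the same deterministic cancellation $\mathcal{L}\exp\bigl(\sum_i a_i x_i-\tfrac12\sum_{i,j}a_ia_j(t_i\wedge t_j)\bigr)=0$ for distinct time points (the paper's \eqref{eq_thm_Ito_formula_Skorohod2}), hence $\mathcal{L}\bigl(a(t)\,W_t^{\diamond l_1}\diamond\cdots\diamond W_{\tau_K}^{\diamond l_K}\bigr)=a'(t)\,W_t^{\diamond l_1}\diamond\cdots\diamond W_{\tau_K}^{\diamond l_K}$, combined with the S-transform proof of the integration by parts formula \eqref{eq_stochastic_int_by_parts_Wick_powers} and Proposition \ref{prop_S_transform_properties} (iii), (iv) to carry the fixed Wick factors through. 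Up to that point the two proofs coincide.

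The gap is in your Steps two and three. The approximants supplied by Proposition \ref{prop_square_integrable_Wick_analytic_approx} are Wick-analytic functionals in the sense of Definition \ref{def_Wick_analytic_functionals}, i.e. with \emph{constant} coefficients $a_{i,k}$, and by Remark \ref{remark_Wick_analytic_L_zero} every such $F^{\diamond}_m$ satisfies $\mathcal{L}F^{\diamond}_m=0$. Hence the claimed convergence $\mathcal{L}F^{\diamond}_m\to\mathcal{L}f^{\diamond}$ in $L^2(\Omega\times[0,1])$ is impossible unless $\mathcal{L}f^{\diamond}=0$, which is exactly the degenerate (exactly simulable) case; your density step would erase the drift term that the theorem is about. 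Moreover, Proposition \ref{prop_square_integrable_Wick_analytic_approx} is stated for a fixed time tuple, not for the process $t\mapsto f(t,W_t,W_{\tau_2},\ldots,W_{\tau_K})$, and bare $L^2(\Omega)$-convergence gives no control of $\partial_{x_1}$ or $\mathcal{L}$ applied to the approximants, so the announced ``upgrade'' is not justified. The paper avoids any such approximation: it expands the integrand itself in the Wiener chaos with \emph{time-dependent} coefficients, $f(t,W_t,W_{\tau_2},\ldots,W_{\tau_K})=\sum_{\bar l}a_{\bar l}(t)\,W_t^{\diamond l_1}\diamond\cdots\diamond W_{\tau_K}^{\diamond l_K}$ with $a_{\bar l}\in C^1([0,1])$ (Remark \ref{remark_Wiener_chaos_by_Wick_products}, using the $C^{1;2,\ldots,2}$ hypothesis), proves the formula chaos by chaos exactly as in your Step one, and then sums: distinct chaoses are orthogonal, the left-hand side and the pathwise drift integral converge in $L^2(\Omega)$, and therefore so does the sum of the Skorohod integrals, which simultaneously yields the Skorohod integrability of $\partial_{x_1}f^{\diamond}$ and the identity \eqref{eq_thm_Ito_formula_Skorohod}. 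Replacing your Steps two and three by this orthogonality-plus-convergence argument closes the gap.
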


\begin{proof} The assumptions in the theorem ensure the existence and linearity of the right hand side in \eqref{eq_thm_Ito_formula_Skorohod}. Thanks to the continuity assumption and Remark \ref{remark_Wiener_chaos_by_Wick_products}, we have a  representation
\begin{equation}\label{eq_thm_Ito_formula_Skorohod1}
f(t,W_{t}, W_{\tau_2}, \ldots, W_{\tau_K}) = \sum_{l_1, \ldots, l_K\geq 0} a_{l_1, \ldots, l_K}(t) W_{t}^{\diamond l_1} \diamond W_{\tau_2}^{\diamond l_2} \diamond \cdots \diamond W_{\tau_K}^{\diamond l_K},  
\end{equation}
where $a_{l_1, \ldots, l_K}(\cdot)\in C^1([0,1])$ for all $l_1, \ldots, l_K \geq 0$. 
Due to linearity and orthogonality, it suffices to consider the finite chaoses $a_{l_1, \ldots, l_K}(t) W_{t}^{\diamond l_1} \diamond W_{\tau_2}^{\diamond l_2} \diamond \cdots \diamond W_{\tau_K}^{\diamond l_K}$ in \eqref{eq_thm_Ito_formula_Skorohod1}. By \eqref{eq_Wick_product_by_Wick_exp} the terms $W_{t}^{\diamond l_1} \diamond W_{\tau_2}^{\diamond l_2} \diamond \cdots \diamond W_{\tau_K}^{\diamond l_K}$ are just polynomials in $W_{t}, W_{\tau_2}, \ldots, W_{\tau_K}$ and $t, \tau_2,\ldots, \tau_K$. For different $t_1, \ldots, t_K$ and arbitrary $a_1,\ldots, a_K \in \R$, we clearly have
\[
\mathcal{L} \exp\left(\sum_{i} a_i x_i - \frac{1}{2}\sum_{i,j} a_i a_j (t_i \wedge t_j)\right) = 0.
\]
Hence,
\[
\mathcal{L} \exp^{\diamond} \left(\sum a_i W_{t_i}\right) =\mathcal{L} \exp\left(\sum_{i} a_i W_{t_i} - \frac{1}{2}\|\sum a_i \eins_{[0,t_1]}\|^2\right)=0 
\]
on the right hand side in \eqref{eq_Wick_product_by_Wick_exp} and this yields
\begin{equation}\label{eq_thm_Ito_formula_Skorohod2}
\mathcal{L} \left(W_{t}^{\diamond l_1} \diamond W_{\tau_2}^{\diamond l_2} \diamond \cdots \diamond W_{\tau_K}^{\diamond l_K}\right) =0
\end{equation}
via Proposition \ref{prop_Wick_product_by_Wick_exp}.
Moreover, for a function $f \in C^1([0,1])$ and all integers $k \geq 1$, we have the integration by parts formula
\begin{equation}\label{eq_stochastic_int_by_parts_Wick_powers}
f(1) W_1^{\diamond k} - f(0)W_0^{\diamond k} = \int_{0}^{1} f(s) kW_s^{\diamond k-1} dW_s + \int_{0}^{1} f'(s) W_s^{\diamond k} ds, 
\end{equation}
which follows by the ordinary It\^o formula or by S-transform as follows: By the ordinary integration by parts formula, we have for all $h \in L^2([0,1])$,
\begin{align*}
&f(1) \left(\int_0^1 h(u) du\right)^k - f(0) \left(\int_{0}^0 h(u) du\right)^{k}\\
&= \int_{0}^1 f(s) k  \left(\int_0^s h(u) du\right)^{k-1} h(s) ds + \int_{0}^1 f'(s) \left(\int_0^s h(u) du\right)^{k} ds.
\end{align*}
This yields the equality of the S-transform $(S \cdot)(h)$ applied on both sides of \eqref{eq_stochastic_int_by_parts_Wick_powers}. Thus, by the injectivity of the S-transform, we obtain \eqref{eq_stochastic_int_by_parts_Wick_powers}. Then, due to Proposition \ref{prop_S_transform_properties} (iii), (iv), the further Wick products in the chaoses applied on \eqref{eq_stochastic_int_by_parts_Wick_powers} carry over and we obtain
\begin{align}
&a_{l_1, \ldots, l_K}(1) W_1^{\diamond l_1} \diamond \cdots \diamond W_{\tau_K}^{\diamond l_K} - a_{l_1, \ldots, l_K}(0) W_0^{\diamond l_1} \diamond \cdots \diamond W_{\tau_K}^{\diamond l_K}\nonumber\\
&= \int_{0}^{1} l_1a_{l_1, \ldots, l_K}(t) W_{t}^{\diamond l_1-1} \diamond \cdots \diamond W_{\tau_K}^{\diamond l_K} dW_{t}  + \int_{0}^{1} \left(\dfrac{\partial}{\partial t}a_{l_1, \ldots, l_K}(t)\right) W_{t}^{\diamond l_1} \diamond \cdots \diamond W_{\tau_K}^{\diamond l_K} ds.\label{eq_thm_Ito_formula_Skorohod3}
\end{align}
Alternatively, analogously to \eqref{eq_stochastic_int_by_parts_Wick_powers}, one can check \eqref{eq_thm_Ito_formula_Skorohod3} directly by S-transforms. Thanks to \eqref{eq_Wick_product_by_Wick_exp} and \eqref{eq_thm_Ito_formula_Skorohod2}, we have
\begin{align}
l_1a_{l_1, \ldots, l_K}(t) W_{t}^{\diamond l_1-1} \diamond \cdots \diamond W_{\tau_K}^{\diamond l_K}  &= \dfrac{\partial}{\partial x_1}a_{l_1, \ldots, l_K}(t) W_{t}^{\diamond l_1} \diamond \cdots \diamond W_{\tau_K}^{\diamond l_K},\nonumber\\
\left(\dfrac{\partial}{\partial t}a_{l_1, \ldots, l_K}(t)\right) W_{t}^{\diamond l_1} \diamond \cdots \diamond W_{\tau_K}^{\diamond l_K} &= \mathcal{L} \left(a_{l_1, \ldots, l_K}(t) W_{t}^{\diamond l_1} \diamond \cdots \diamond W_{\tau_K}^{\diamond l_K}\right).\label{eq_thm_Ito_formula_Skorohod4}
\end{align}
Thus, via \eqref{eq_thm_Ito_formula_Skorohod3}, we conclude the asserted It\^o formula for all finite chaoses in \eqref{eq_thm_Ito_formula_Skorohod1}. The strong convergence of the Wiener chaos decomposition of the left hand side and of the pathwise integral $\int_{0}^{1} \mathcal{L} f^{\diamond}(t,\ldots, W_{\tau_K})dt$ in \eqref{eq_thm_Ito_formula_Skorohod} implies the existence and square integrability of the asserted Skorohod integral.
\end{proof}

\begin{remark}\label{remark_Wick_analytic_L_zero}
Thanks to \eqref{eq_thm_Ito_formula_Skorohod2} and the continuous density of $(W_{t_1}, W_{t_2},\ldots, W_{t_K})$ on $\R^K$, we conclude for the analytic representation $f^{\diamond} \in C^{\infty}([0,1]^K \times \R^K)$ of a Wick-analytic functional $F^{\diamond}(W_{t_1}, W_{t_2},\ldots, W_{t_K})$ via Proposition \ref{prop_Wick_product_Gaussian_analytic} that
\[
\mathcal{L} f^{\diamond} =0. 
\]
\end{remark}

\section{Exact simulation of Skorohod integrals}\label{Section_Exact_sim}

In this section we characterize, when the exact simulation of $I$ is possible, at least theoretically. We have the following conditions for exact simulation of Skorohod integrals and the connection to the Wick-analytic representation. 

\begin{theorem}\label{thm_exact_simulation_is_Wick_analytic}
Suppose $f \in C([0,1] \times \R^{K})$ and that
$\left(u_{t}= f(t ,W_{t}, W_{\tau_2},\ldots, W_{\tau_K})\right)_{t \in [0,1]}$ is Skorohod integrable for all fixed $\tau_2,\ldots, \tau_K \in [0,1]$. Then the following assertions are equivalent:
\begin{enumerate}
\item There exists a Wick-analytic representation of $u$ in sense of \eqref{eq_simple_Wick_analytic_functional}.
\item $f^{\diamond}\in C^{\infty}([0,1]^K\times \R^{K})$ and $\mathcal{L} f^{\diamond}=0$ on $[0,1]^K \times \R^{K}$ .
\item $f^{\diamond} \in C^{1,\ldots,1,2,\ldots, 2}([0,1]^K\times \R^{K})$ and $\mathcal{L} f^{\diamond} =0$ on $[0,1]^K \times \R^{K}$.
\item There exists a Borel function $h: \R^{K} \rightarrow \R$ with
 \[
h(W_1, W_{\tau_2},\ldots, W_{\tau_K}) = \int_{0}^{1}u_s dW_{s}.
 \]
\end{enumerate} 

If any of the assertions in (i)-(iv) is true, then 
\[
\ex\left[\left(\int_{0}^{1}u_s dW_s-\ex[\int_{0}^{1}u_s dW_s|W_1, W_{\tau_2},\ldots, W_{\tau_K}]\right)^2\right]=0.
\] 
\end{theorem}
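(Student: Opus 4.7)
The plan is to prove the cycle (i)$\Rightarrow$(ii)$\Rightarrow$(iii)$\Rightarrow$(iv)$\Rightarrow$(i) and to deduce the final $L^2$-identity directly from (iv). The first two implications are immediate: (i)$\Rightarrow$(ii) combines Proposition \ref{prop_Wick_product_Gaussian_analytic}(ii), giving $f^\diamond \in C^\infty$, with Remark \ref{remark_Wick_analytic_L_zero}, giving $\mathcal{L} f^\diamond = 0$; and (ii)$\Rightarrow$(iii) is a trivial inclusion of regularity classes.

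For (iii)$\Rightarrow$(iv), expand $u_s = f(s, W_s, W_{\tau_2}, \ldots, W_{\tau_K})$ in the Wiener chaos via Remark \ref{remark_Wiener_chaos_by_Wick_products} as
\[
u_s = \sum_{l_1, \ldots, l_K \geq 0} a_{l_1, \ldots, l_K}(s)\, W_s^{\diamond l_1} \diamond W_{\tau_2}^{\diamond l_2} \diamond \cdots \diamond W_{\tau_K}^{\diamond l_K}.
\]
Since $\mathcal{L} h^\diamond = 0$ for every Wick polynomial (Remark \ref{remark_Wick_analytic_L_zero}) and the $h^\diamond_{l_1,\ldots,l_K}$ are linearly independent, the hypothesis $\mathcal{L} f^\diamond = 0$ is equivalent to the coefficients $a_{l_1, \ldots, l_K}(s)$ being independent of $s$. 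Pulling the Wick factors $W_{\tau_k}^{\diamond l_k}$ outside the Skorohod integral by Proposition \ref{prop_S_transform_properties}(iii) and using the elementary Wick--It\^o identity $\int_0^1 W_s^{\diamond l_1}\, dW_s = W_1^{\diamond l_1+1}/(l_1+1)$ (obtained from Theorem \ref{thm_Ito_formula_Skorohod} applied to the Hermite polynomial $W_1^{\diamond l_1+1}/(l_1+1)$), one obtains
\[
\int_0^1 u_s\, dW_s = \sum_{l_1, \ldots, l_K} \frac{a_{l_1, \ldots, l_K}}{l_1+1}\, W_1^{\diamond l_1+1} \diamond W_{\tau_2}^{\diamond l_2} \diamond \cdots \diamond W_{\tau_K}^{\diamond l_K},
\]
which is a Wick polynomial in $(W_1, W_{\tau_2}, \ldots, W_{\tau_K})$ and hence, by Proposition \ref{prop_Wick_product_by_Wick_exp}, a Borel function of these values.

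For (iv)$\Rightarrow$(i), the plan is to compare S-transforms. With $G(s) := \int_0^s g(u)\, du$ for $g \in L^2([0,1])$ one has $(SW_t)(g) = G(t)$, and the multiplicativity of the S-transform over Wick products turns both sides of $\int_0^1 u_s dW_s = h(W_1, W_{\tau_2}, \ldots, W_{\tau_K})$ into power series in $G(1), G(\tau_2), \ldots, G(\tau_K)$. Matching the coefficient of each tail monomial $\prod_{k\geq 2} G(\tau_k)^{l_k}$, the defining identity of the Skorohod integral reduces to
\[
\sum_{l_1 \geq 0} \int_0^1 a_{l_1, l_2, \ldots, l_K}(s)\, G(s)^{l_1} G'(s)\, ds = \sum_{m_1 \geq 0} b_{m_1, l_2, \ldots, l_K}\, G(1)^{m_1},
\]
where the $b$'s are the chaos coefficients of $h(W_1, W_{\tau_2}, \ldots, W_{\tau_K})$. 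Integration by parts on the left converts each summand to $a_{l_1,\ldots,l_K}(1) G(1)^{l_1+1}/(l_1+1) - \int_0^1 a'_{l_1,\ldots,l_K}(s) G(s)^{l_1+1}/(l_1+1)\, ds$, using $G(0) = 0$. Since the residual integral depends on $G$ at interior points while the right-hand side depends only on $G(1)$, varying $G$ among absolutely continuous functions with $G(0) = 0$ forces $a'_{l_1, \ldots, l_K}(s) \equiv 0$; all coefficients are thus constant in $s$, yielding the Wick-analytic representation \eqref{eq_simple_Wick_analytic_functional} in (i).

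The main obstacle is the rigorous execution of (iv)$\Rightarrow$(i) in the presence of infinite chaos: the coefficient-matching argument is direct for finite-chaos truncations, but the full series requires a truncation-then-limit procedure combined with the continuity and injectivity of the S-transform and the density of Wick-analytic functionals (Proposition \ref{prop_square_integrable_Wick_analytic_approx}). The concluding $L^2$-identity follows immediately from (iv): the random variable $h(W_1, W_{\tau_2}, \ldots, W_{\tau_K})$ is already measurable with respect to $\sigma(W_1, W_{\tau_2}, \ldots, W_{\tau_K})$, so it coincides with its own conditional expectation given this $\sigma$-algebra, and the claimed $L^2$-distance vanishes.
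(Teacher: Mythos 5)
Your cycle (i)$\Rightarrow$(ii)$\Rightarrow$(iii)$\Rightarrow$(iv)$\Rightarrow$(i) agrees with the paper up to and including what you call (iii)$\Rightarrow$(iv): your argument there is exactly the paper's (iii)$\Rightarrow$(i) (constancy of the chaos coefficients via $\mathcal{L}(a_{\bar l}(t)W_t^{\diamond l_1}\diamond\cdots)=a_{\bar l}'(t)W_t^{\diamond l_1}\diamond\cdots$ and linear independence) composed with its (i)$\Rightarrow$(iv) (pulling the nonadapted Wick factors out of the Skorohod integral and using $\int_0^1 W_s^{\diamond l_1}dW_s=W_1^{\diamond(l_1+1)}/(l_1+1)$). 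The final $L^2$-identity from (iv) is also handled as in the paper. The genuine problem is your (iv)$\Rightarrow$(i). First, the step ``matching the coefficient of each tail monomial $\prod_{k\geq 2}G(\tau_k)^{l_k}$'' is not licit as stated: $G(1),G(\tau_2),\ldots,G(\tau_K)$ and the interior values $G(s)$ are all functionals of the \emph{same} $g\in L^2([0,1])$, so an identity of two expansions valid for every $g$ does not automatically split along powers of $G(\tau_k)$; one must first justify the separation, e.g.\ by projecting onto the Wiener chaoses via Proposition \ref{prop_S_transform_properties} (i)--(ii) and using that each projection $\pi_m$ of the $\sigma(W_1,W_{\tau_2},\ldots,W_{\tau_K})$-measurable right-hand side is a Wick polynomial in $W_1,W_{\tau_2},\ldots,W_{\tau_K}$ (Remark \ref{remark_Wiener_chaos_by_Wick_products}), and then arguing linear independence of the resulting finite families. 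Second, you yourself flag, but do not carry out, the truncation-and-limit procedure needed to pass from finite-chaos identities to the full series, and the variational step ``varying $G$ forces $a_{\bar l}'\equiv 0$'' is only sketched (it needs G\^ateaux-type perturbations fixing $G(1)$ and the $G(\tau_k)$, plus a polynomial-identity argument). As it stands, the hardest implication of the theorem is a plan with acknowledged holes rather than a proof.

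For comparison, the paper avoids these issues entirely in (iv)$\Rightarrow$(i): it sets $g(s_1,\ldots,s_K,x_1,\ldots,x_K):=\ex[h(x_1+W_{1-s_1},\ldots,x_K+W_{\tau_K-s_K})]$, so that heat-kernel computations give $g\in C^{\infty}$ and $\mathcal{L}g=0$; the zero-mean property of Skorohod integrals together with the It\^o formula of Theorem \ref{thm_Ito_formula_Skorohod} then identifies $h(W_1,W_{\tau_2},\ldots,W_{\tau_K})=\int_0^1\frac{\partial}{\partial x_1}g(t,\tau_2,\ldots,\tau_K,W_t,W_{\tau_2},\ldots,W_{\tau_K})dW_t$, hence $f^{\diamond}=\frac{\partial}{\partial x_1}g$ a.s., and the already proved implication (iii)$\Rightarrow$(i) applied to $g$ yields the Wick-analytic representation. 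If you want to keep your S-transform route, you should either supply the chaos-projection and variational details described above, or switch to the paper's semigroup argument, which reduces (iv)$\Rightarrow$(i) to a case you have already settled.
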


\begin{remark}
This result generalizes \cite[Theorem 3.1]{Przybylowicz} to nonadapted processes and the connection to the Wick-analytic representation simplifies the proof.
\end{remark}

As a direct Corollary we obtain: 

\begin{corollary}
 Suppose $f \in C^{1;2,\ldots, 2}([0,1]\times \R^{K})$ and denote 
$\left(u_{t}= f(t ,W_{t}, W_{\tau_2},\ldots, W_{\tau_K})\right)_{t \in [0,1]}$ (which is Skorohod integrable for all fixed $\tau_2,\ldots, \tau_K \in [0,1]$ by Theorem \ref{thm_Ito_formula_Skorohod}). Then we have
$$ I = \hat{I}_n \quad a.s. \qquad \Longleftrightarrow \qquad \int_{0}^{1} \ex[\mathcal{L} f^{\diamond}(s, \tau_2,\ldots, \tau_K,W_s,W_{\tau_2}, \ldots, W_{\tau_K})^2] ds =0.$$
\end{corollary}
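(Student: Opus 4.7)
The plan is to derive the corollary directly from Theorem \ref{thm_exact_simulation_is_Wick_analytic}, using the equivalence (iii) $\Leftrightarrow$ (iv) there. I will treat the two directions separately, with a standard enlargement trick for the harder one.

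For the direction $\Leftarrow$, I would first note that $f \in C^{1;2,\ldots,2}$ implies (by Remark \ref{remark_f_diamond_smoothness}) $f^\diamond \in C^{1,\ldots,1;2,\ldots,2}$, so $\mathcal{L} f^\diamond$ is continuous. The Gaussian vector $(W_s, W_{\tau_2}, \ldots, W_{\tau_K})$ has full support on $\R^K$ for Lebesgue-a.e.\ $s \in [0,1]$, so the vanishing expectation in the hypothesis upgrades first to $\mathcal{L} f^\diamond(s, \tau_2, \ldots, \tau_K, \cdot) \equiv 0$ on $\R^K$ for a.e.\ $s$, and then to all $s$ by continuity. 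To promote this slice identity to the full statement $\mathcal{L} f^\diamond \equiv 0$ on $[0,1]^K \times \R^K$ required by condition (iii) of Theorem \ref{thm_exact_simulation_is_Wick_analytic}, I would use Remark \ref{remark_Wiener_chaos_by_Wick_products} to expand $f^\diamond = \sum_\ell a_\ell(t_1)\, h^\diamond_\ell$ with $\mathcal{L} h^\diamond_\ell = 0$ (Remark \ref{remark_Wick_analytic_L_zero}). The slice identity becomes $\sum_\ell a'_\ell(t_1)\, h^\diamond_\ell(t_1, \tau_2, \ldots, \tau_K, x) = 0$, and linear independence of the Wick polynomials $h^\diamond_\ell$ in $x$ forces each $a'_\ell \equiv 0$, so $\mathcal{L} f^\diamond \equiv 0$ on the full $[0,1]^K \times \R^K$. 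Theorem \ref{thm_exact_simulation_is_Wick_analytic}(iii) $\Rightarrow$ (iv) then produces a Borel $h$ with $I = h(W_1, W_{\tau_2}, \ldots, W_{\tau_K})$, which is $\sigma(\P_n \cup \{W_{\tau_2}, \ldots, W_{\tau_K}\})$-measurable and hence equals $\hat{I}_n$ a.s.

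For the direction $\Rightarrow$, I would absorb the grid points into the ``nonadapted'' data. Setting $\tilde K := K + n - 1$, $(\tilde\tau_2, \ldots, \tilde\tau_{\tilde K}) := (\tau_2, \ldots, \tau_K, 1/n, \ldots, (n-1)/n)$, and $\tilde f(t, x_1, \ldots, x_{\tilde K}) := f(t, x_1, \ldots, x_K)$, we have $\tilde f \in C^{1;2,\ldots,2}([0,1]\times \R^{\tilde K})$, the integrand equals $u_t = \tilde f(t, W_t, W_{\tilde\tau_2}, \ldots, W_{\tilde\tau_{\tilde K}})$, and the hypothesis $I = \hat{I}_n$ a.s.\ rewrites as $I = \tilde h(W_1, W_{\tilde\tau_2}, \ldots, W_{\tilde\tau_{\tilde K}})$ for some Borel $\tilde h$. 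Applying Theorem \ref{thm_exact_simulation_is_Wick_analytic}(iv) $\Rightarrow$ (iii) to $\tilde f$ then yields $\tilde{\mathcal{L}} \tilde f^\diamond \equiv 0$ on $[0,1]^{\tilde K} \times \R^{\tilde K}$.

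The closing step is to identify $\tilde f^\diamond$ with a trivial extension of $f^\diamond$. Since $\tilde f$ does not depend on its last $n-1$ real arguments and the Gaussian $(W_t, W_{\tilde\tau_2}, \ldots, W_{\tilde\tau_{\tilde K}})$ has positive density on $\R^{\tilde K}$ whenever the times are distinct and positive, uniqueness of the analytic representation forces $\tilde f^\diamond(t, \tau_2, \ldots, \tau_K, 1/n, \ldots, (n-1)/n, x_1, \ldots, x_{\tilde K}) = f^\diamond(t, \tau_2, \ldots, \tau_K, x_1, \ldots, x_K)$, independent of $x_{K+1}, \ldots, x_{\tilde K}$. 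Consequently all additional summands in $\tilde{\mathcal{L}} - \mathcal{L}$ annihilate $\tilde f^\diamond$, so $\mathcal{L} f^\diamond \equiv 0$ and the integral vanishes. The main obstacle is making this last uniqueness argument rigorous for the extended analytic representation; the remaining bookkeeping (degenerate times such as $s = 0$ or $s \in \{\tau_i\}$, continuity of $f^\diamond$) is routine.
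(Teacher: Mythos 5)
Your proposal is correct and follows essentially the route the paper intends, since the paper presents this statement as a direct consequence of Theorem \ref{thm_exact_simulation_is_Wick_analytic}: your ``$\Leftarrow$'' direction is exactly the constant-coefficient argument behind conditions (i)/(iii) of that theorem (via \eqref{eq_thm_Ito_formula_Skorohod4} and linear independence of the Wick monomials per chaos level), and your ``$\Rightarrow$'' direction, enlarging the family of fixed time points by the grid $1/n,\ldots,(n-1)/n$ so that $\hat{I}_n$ fits condition (iv), is precisely the detail the paper leaves implicit. The points you flag (degenerate or coinciding time points, uniqueness of the extended analytic representation) are indeed routine, e.g.\ by reducing to distinct positive times as in Remark \ref{remark_Wiener_chaos_by_Wick_products} and using continuity of the coefficient functions $a_{\bar{l}}$.
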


\begin{proof}[Proof of Theorem \ref{thm_exact_simulation_is_Wick_analytic}]
Firstly we recall from Proposition \ref{prop_Wick_analytic_functionals_nice_properties} and Proposition \ref{prop_Wick_product_Gaussian_analytic} that for every Wick-analytic functional the (ordinary) analytic representation 
\[
F^{\diamond}(W_{t_1}, W_{t_2},\ldots, W_{t_K}) = f^{\diamond}(t_1,\ldots, t_K,W_{t_1}, W_{t_2},\ldots, W_{t_K})
\]

is in $C^{\infty}([0,1]^K \times \R^{K})$ and all derivatives converge absolutely (cf. the simple derivatives in \eqref{eq_thm_Ito_formula_Skorohod4} and Proposition \ref{prop_Wick_product_Gaussian_analytic}).

We prove $(i) \Rightarrow (ii) \Rightarrow (iii) \Rightarrow (i)$ and then $(i) \Rightarrow (iv) \Rightarrow (i)$.

$(i) \Rightarrow (ii)$: Suppose the representation 
\begin{equation}\label{eq_thm_exact_simulation_is_Wick_analytic1}
u_t = \sum_{k_1,\ldots, k_K \in \N} a_{k_1, \ldots, k_K} W_{t}^{\diamond k_1} \diamond W_{\tau_2}^{\diamond k_2} \diamond\cdots W_{\tau_K}^{\diamond k_K}. 
\end{equation}

Then, via Proposition \ref{prop_Wick_product_Gaussian_analytic} and Remark \ref{remark_Wick_analytic_L_zero}, we conclude assertion (ii). 

$(ii) \Rightarrow (iii)$: is clear.

$(iii) \Rightarrow (i)$: By the Wiener chaos decomposition, we have the representation \eqref{eq_thm_Ito_formula_Skorohod1} and it suffices to consider the individual chaoses
\[
a_{l_1, \ldots, l_K}(t) W_{t}^{\diamond k_1} \diamond W_{\tau_2}^{\diamond k_2} \diamond\cdots W_{\tau_K}^{\diamond k_K}.
\]
 in \eqref{eq_thm_Ito_formula_Skorohod1}. Hence, due to the assumption and the integration by parts formula \eqref{eq_thm_Ito_formula_Skorohod3}, we obtain that the coefficients $a_{l_1, \ldots, l_K}$ must be constant. Thus we obtain a representation \eqref{eq_thm_exact_simulation_is_Wick_analytic1}.

$(i) \Rightarrow (iv)$: Due to Proposition \ref{prop_S_transform_properties} (iii) and the definition of the Skorohod integral, for \eqref{eq_thm_exact_simulation_is_Wick_analytic1}, 
\[
\int_{0}^{1}u_t dW_{t} = \sum_{k_1,\ldots, k_K \in \N} a_{k_1, \ldots, k_K} \frac{1}{k_1+1}W_{1}^{\diamond k_1+1} \diamond \cdots W_{\tau_K}^{\diamond k_K},
\]
which is a Wick-analytic functional and exhibits an analytic representation $h(W_{1}, W_{\tau_2},\ldots, W_{\tau_K})$ for some $h \in C^{\infty}(\R^K)$ (and fixed $\tau_2,\ldots, \tau_K$) via Proposition \ref{prop_Wick_product_Gaussian_analytic}.

$(iv) \Rightarrow (i)$: We define the function
\[
g(s_1, \ldots, s_K, x_1,\ldots, x_K) := \ex[h(x_1 + W_{1-s_1}, x_2 + W_{\tau_2-s_2}, \ldots, x_K+W_{\tau_K-s_K})],
\]
which is well-defined by Skorohod integrability of $u$ and $\ex[h(W_{1}, W_{\tau_2},\ldots, W_{\tau_K})^2] <\infty$. Then, analogously to the case $K=1$ (see e.g. \cite[Section 4.3]{Karatzas_Shreve}), we obtain by computations reduced on the multivariate heat kernel that $g \in C^{\infty}([0,1]\times [0,\tau_2] \times \cdots \times [0,\tau_K] \times \R^K)$ and $\mathcal{L} g = 0$. Hence, by the zero mean property of Skorohod integrals and the It\^o formula in Theorem \ref{thm_Ito_formula_Skorohod}, we have
\begin{align*}
&h(W_1, W_{\tau_2},\ldots, W_{\tau_K})\\
&=\left.\ex[h(x_1 + W_{0}, x_2 + W_{0}, \ldots, x_K+W_{0})]\right|_{x_1=W_1, x_2=W_{\tau_2}, \ldots, x_K=W_{\tau_K}}\\
&\quad -\left.\ex[h(x_1+W_{1}, x_2 + W_{0}, \ldots, x_K+W_{0})]\right|_{x_1=W_0, x_2=W_{\tau_2},\ldots, x_K=W_{\tau_K}}\\
&= g(1,\tau_2,\ldots, \tau_K, W_1,W_{\tau_2},\ldots, W_{\tau_K}) - g(0,\tau_2,\ldots, \tau_K, W_0,W_{\tau_2},\ldots, W_{\tau_K})\\
&=\int_{0}^{1}\dfrac{\partial}{\partial x_1}g(t,\tau_2,\ldots, \tau_K, W_t,W_{\tau_2},\ldots, W_{\tau_K})dW_{t}.
\end{align*}
Thus we obtain
\[
f^{\diamond}(t,\tau_2,\ldots, \tau_K, W_t,W_{\tau_2},\ldots, W_{\tau_K})=\dfrac{\partial}{\partial x_1}g(t,\tau_2,\ldots, \tau_K, W_t,W_{\tau_2},\ldots, W_{\tau_K}) \ (a.s.) .
\]
Via $g \in C^{\infty}([0,1]^K \times \R^K)$ and $\mathcal{L} g = 0$ and the already proved part $(iii) \Rightarrow (i)$, we get a Wick-analytic representation of $g(t,\tau_2,\ldots, \tau_K, W_t,W_{\tau_2},\ldots, W_{\tau_K})$. Thus, by Proposition \ref{prop_Wick_product_Gaussian_analytic} (iii), we conclude a Wick-analytic representation of $f^{\diamond}(t,\tau_2,\ldots, \tau_K, W_t,W_{\tau_2},\ldots, W_{\tau_K})$.

The statement on exact approximation is a direct consequence of (iv).
\end{proof}

\begin{remark}
Whether the integral $I$ can be exactly simulated or not, is determined by the pathwise defect 
\[
\int_{0}^{1} \mathcal{L} f^{\diamond}(t,\tau_2,\ldots, \tau_K, W_t,W_{\tau_2},\ldots, W_{\tau_K})dt \neq 0,
\]
due to Theorem \ref{thm_Ito_formula_Skorohod} and Theorem \ref{thm_exact_simulation_is_Wick_analytic}. The pathwise defect is again completely determined by the deviation of the nonconstant coefficients in \eqref{eq_thm_Ito_formula_Skorohod1} in contrast to the constant coefficients in a Wick-analytic functional.
\end{remark}


\section{Optimal approximation of Skorohod integrals}\label{Section_Optimal_approx_Skorohod_integrals}

We restrict ourselves to the simple case of an equidistant discretization  plus the knowledge of the non-adapted part, i.e.
\[
\P_n:= \{W_{\frac{1}{n}}, W_{\frac{2}{n}}, \ldots, W_1, W_{\tau_2}, \ldots, W_{\tau_K} \}.
\]
We denote the Skorohod integral as
\[
I := \int_{0}^{1}u_s dW_s 
\]
and  its optimal $L^2$-approximation as
\[
\hat{I}^n:= \ex[I|\P_n] = \ex[\int_{0}^{1}u_s dW_s|\P_n]. 
\]
The mean square error is denoted by
\[
e_n := \ex[(I-\hat{I}^n)^2]^{1/2}. 
\]
In contrast to It\^o integrals, the optimal approximation of Skorohod integrals of the type 
\begin{equation}\label{eq_Skorohod_integral_opt_approx}
I = \int_{0}^{1} f(t, \tau_2,\ldots, \tau_K,W_{t}, W_{\tau_2}, \ldots, W_{\tau_K}) dW_{t}, 
\end{equation}
depends on the information $W_{\tau_2}, \ldots, W_{\tau_K}$, i.e. the nonadapted fixed timepoints $\tau_2,\ldots, \tau_K$. So, for the sake of notational simplicity, we here use the representation $f=f^{\diamond}$.  For the behaviour of the minimal error we obtain 
\[
e_n \approx C \cdot n^{-1},
\]
which generalizes the It\^o case and leads to a deeper understanding of the constant in the optimal convergence rate. 
We determine this optimal approximation rate and the constant for a large class of integrands. 

The optimal approximation of It\^o integrals uses the following Lipschitz and linear growth conditions on $f: [0,1] \times \R \rightarrow \R$:
\begin{itemize}
 \item[(L\_1)] There exists a constant $c>0$, such that for all $t \in [0,1]$, $x,y\in\R$,
 \[
|f(t,x)-f(t,y)| \leq c|x-y|.
 \]
\item[(LLG)] There exists a constant $c>0$, such that for all $s,t \in [0,1]$, $x\in\R$,
 \[
|f(s,x)-f(t,x)| \leq c(1+|x|)|s-t|.
 \]
\end{itemize}
Then, the class $F_1$ is defined by $f \in C^{1,2}([0,1] \times \R)$ such that the partial derivative $f^{(1,0)}$ satisfies (LLG) and $f^{(0,1)}$, $f^{(0,2)}$ satisfy (L\_1) and (LLG). For $f \in F_1$, the It\^o integral $I := \int_{0}^{1}f(s,W_s) dW_s$ and its optimal approximation $\tilde{I}^n:= \ex[I| W_{\frac{1}{n}}, W_{\frac{2}{n}}, \ldots, W_1]$, the convergence 
\begin{equation}\label{eq_optimal_approx_simple_Ito_rate}
\lim\limits_{n \rightarrow \infty} n \cdot 
\ex[(I- \tilde{I}^n  )^2]^{1/2}
= \frac{1}{\sqrt{12}} \left(\int_{0}^{1} \ex[\mathcal{L} f(s,W_s)^2] ds\right)^{1/2},
\end{equation}
is established (cf. \cite{Przybylowicz, Mueller_Gronbach}).

We assume the integral in \eqref{eq_Skorohod_integral_opt_approx} to exist and consider weaker regularity assumptions. The assumptions are
a Lipschitz continuity and a H\"older growth condition on $\mathcal{L} f^{\diamond}$ from Remark \ref{remark_Wiener_chaos_by_Wick_products} and Theorem \ref{thm_Ito_formula_Skorohod}:
\begin{itemize}
 \item[(L)] There exists a constant $c>0$, such that for all $t= (t_1,t_2 \ldots, t_K) \in [0,1]^K$, $x,y \in\R^K$,
 \begin{align*}
&\left| \mathcal{L} f^{\diamond}(t, x)-\mathcal{L} f^{\diamond}(t, y)\right|\leq c|x-y|_{\R^K}.
\end{align*}
 \item[(HG)] There exist constants $c>0$, $\ve>0$, such that for all $u,v \in [0,1]^K$, $x \in\R^K$,
 \begin{align*}
&\left| \mathcal{L} f^{\diamond}(u,x)-\mathcal{L} f^{\diamond}(v,x)\right|\leq c(1+|x|_{\R^K})|u-v|^{\ve},  
 \end{align*}
\end{itemize}

Similarly to the It\^o integral in \eqref{eq_optimal_approx_simple_Ito_rate}, but under weaker assumptions on regularity, we have the following main result on optimal approximation of Skorohod integrals:

\begin{theorem}\label{thm_optimal_approx_simple_Skorohod}
Suppose $K\in \N$, $ f=f^{\diamond}\in C^{1,\ldots,1,2,\ldots, 2}([0,1]^K\times \R^{K})$ with (L) and (HG) and let 
\[
C:= \frac{1}{\sqrt{12}} \left(\int_{0}^{1} \ex[\mathcal{L} f^{\diamond} (s, \tau_2,\ldots, \tau_K,W_s,W_{\tau_2}, \ldots, W_{\tau_K})^2] ds\right)^{1/2}. 
\]
Then
\begin{equation*}
\lim\limits_{n \rightarrow \infty} n \cdot \ex[(I-\ex[I|\P_n ])^2]^{1/2} = C.
\end{equation*}
\end{theorem}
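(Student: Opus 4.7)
The plan is to localize the error as $I - \hat{I}^n = \sum_{i=0}^{n-1}(I_i - \hat{I}_i^n)$ with $I_i = \int_{t_i}^{t_{i+1}} u_s\,dW_s$ on the subintervals $[t_i, t_{i+1}]$, $t_i = i/n$, and then identify the leading-order contribution through a local Wick--It\^o expansion. With $u_s = f^{\diamond}(s, \tau_2, \ldots, \tau_K, W_s, W_{\tau_2}, \ldots, W_{\tau_K})$, Theorem~\ref{thm_Ito_formula_Skorohod} applied to $u_s$ on $[t_i, s]$ and then integrated against $dW_s$ over $[t_i, t_{i+1}]$ yields the decomposition
\begin{align*}
I_i \,=\, u_{t_i} \diamond (W_{t_{i+1}} - W_{t_i}) &+ \int_{t_i}^{t_{i+1}}\!\int_{t_i}^{s}\frac{\partial f^{\diamond}}{\partial x_1}(v,\ldots,W_v,\ldots)\,dW_v\,dW_s \\
&+ \int_{t_i}^{t_{i+1}}\!\int_{t_i}^{s} \mathcal{L} f^{\diamond}(v,\ldots,W_v,\ldots)\,dv\,dW_s.
\end{align*}
The first summand is $\P_n$-measurable, since the space of square-integrable $\P_n$-measurable functionals is closed under Wick products (Proposition~\ref{prop_Wick_product_by_Wick_exp}). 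Replacing $\frac{\partial f^{\diamond}}{\partial x_1}(v,W_v,\ldots)$ by $\frac{\partial f^{\diamond}}{\partial x_1}(t_i,W_{t_i},\ldots)$ in the Wick-quadratic summand reduces it to $\frac{\partial f^{\diamond}}{\partial x_1}(t_i,\ldots) \diamond \tfrac{1}{2}[(W_{t_{i+1}}-W_{t_i})^2-(t_{i+1}-t_i)]$, which is again $\P_n$-measurable (the inner integral is an adapted It\^o integral because $W_s-W_{t_i}$ is adapted on $[t_i,t_{i+1}]$), and hence also cancels in $I_i - \hat{I}_i^n$ up to a remainder.

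The surviving leading contribution thus comes from freezing $\mathcal{L} f^{\diamond}$ at $(t_i, W_{t_i})$ in the drift double integral. Since $\mathcal{L} f^{\diamond}(t_i, \ldots, W_{t_i}, \ldots)$ is $\P_n$-measurable, Proposition~\ref{prop_S_transform_properties}(iii) together with the commutativity of Wick product and conditional expectation (Proposition~\ref{prop_Wick_conditional_expectation}) gives
\[
I_i - \hat{I}_i^n \,\approx\, \mathcal{L} f^{\diamond}(t_i,\ldots, W_{t_i},\ldots) \diamond \left[\int_{t_i}^{t_{i+1}}(s-t_i)\,dW_s - \ex\Big[\int_{t_i}^{t_{i+1}}(s-t_i)\,dW_s\,\Big|\,\P_n\Big]\right],
\]
and the bracket equals $-\int_{t_i}^{t_{i+1}}(W_s - W_s^{\lin})\,ds$ via Brownian-bridge integration by parts (the boundary terms vanish because $W_s - W_s^{\lin}$ is zero at $s=t_i$ and $s=t_{i+1}$). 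On every subinterval disjoint from the finite set $\{\tau_2,\ldots,\tau_K\}$, the bridge $(W_s - W_s^{\lin})_{s\in[t_i,t_{i+1}]}$ is independent of $\P_n$, so the Wick product reduces to the ordinary product, and cross terms across distinct subintervals vanish by independence of different bridges combined with their zero means. The standard variance identity $\vari\bigl(\int_{t_i}^{t_{i+1}}(W_s-W_s^{\lin})\,ds\bigr) = (t_{i+1}-t_i)^3/12$ together with a Riemann-sum convergence enabled by (L) and (HG) then yields $n^2\,\ex[(I-\hat{I}^n)^2] \to C^2$.

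The main obstacle will be the quantitative control of the two freezing remainders and the passage from explicitly representable integrands to the full class. The drift-term remainder $\int\!\int[\mathcal{L} f^{\diamond}(v,W_v,\ldots) - \mathcal{L} f^{\diamond}(t_i,W_{t_i},\ldots)]\,dv\,dW_s$ is bounded directly via (L) and (HG); the Wick-quadratic remainder instead involves $\frac{\partial f^{\diamond}}{\partial x_1}$, for which only the weak regularity $f^{\diamond} \in C^{1,\ldots,1;2,\ldots,2}$ is assumed. The natural strategy is to prove the theorem first for integrands whose chaos decomposition terminates: there the Skorohod integrals of all intermediate derivatives exist in closed form, the $L^2$-norms of all double Skorohod integrals are given by the Wick isometry \eqref{eq_Wick_product_of_Gaussian_inner_product}, and a mean-value argument within each finite chaos justifies the freezing. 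The result is then lifted to general $f^{\diamond}$ via a diagonal argument grounded in the density result Proposition~\ref{prop_square_integrable_Wick_analytic_approx} combined with the uniform moment bounds of Proposition~\ref{prop_Wick_analytic_functionals_nice_properties}, with the injectivity of the S-transform used to identify all limit constants. The finitely many subintervals that contain some $\tau_j$ each contribute only $O(1/n^3)$ to $\ex[(I-\hat{I}^n)^2]$ and are absorbed into the error.
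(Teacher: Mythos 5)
Your route is the classical Wagner--Platen-style one (localize on the grid intervals, expand the integrand by the It\^o formula, freeze coefficients at the left endpoint), transplanted to the Skorohod setting via Wick products; the paper deliberately avoids this and instead works globally with the Wiener chaos decomposition, the integration-by-parts identity \eqref{eq_thm_Ito_formula_Skorohod3}, and an exact representation of $I-\hat I^n$ chaos-by-chaos as $\sum_i\int_{(i-1)/n}^{i/n}B^n_s\diamond(\cdots)\,ds$, whose covariances are computed directly from \eqref{eq_Wick_product_of_Gaussian_inner_product}. In that global representation the derivative $\frac{\partial}{\partial x_1}f^{\diamond}$ never enters the error at all; only $\mathcal L f^{\diamond}$ and its first $x$-derivatives (bounded a.e.\ by (L)) appear, which is exactly why the weak hypotheses (L), (HG) suffice.

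Your version has two genuine gaps. First, the freezing of $\frac{\partial}{\partial x_1}f^{\diamond}$ in the Wick-quadratic term requires a modulus of continuity of $\frac{\partial}{\partial x_1}f^{\diamond}(v,\ldots,W_v,\ldots)$ in $L^2$, which is not available: (L) and (HG) constrain $\mathcal L f^{\diamond}$ only. Your remedy --- prove it for integrands with terminating chaos and lift by a ``diagonal argument'' --- is not carried by the tools you cite: Proposition~\ref{prop_square_integrable_Wick_analytic_approx} approximates a fixed random variable by Wick-analytic functionals, which have constant coefficients and hence $\mathcal L f^{\diamond}\equiv 0$, i.e.\ zero error, so it is the wrong approximating class; and even with the correct chaos truncation $u^{\leq M}$, transferring the constant requires a bound on $n\cdot\ex[(\int_0^1(u_s-u^{\leq M}_s)dW_s-\ex[\cdot|\P_n])^2]^{1/2}$ that is uniform in $n$ and vanishes as $M\to\infty$ --- essentially the upper-bound half of the theorem for the tail --- which mere $L^2$-smallness of the tail does not give. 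The paper sidesteps this because the chaos levels are orthogonal in its error representation, so the squared errors add exactly and the sums over $M$ are controlled by Step 1 through identities of the type $\sum_M\ex[\cdot]=\ex[(\mathcal L f)^2]$. Second, in the Skorohod setting there is no It\^o isometry, so your remainder terms on different intervals are not martingale increments; the independence-of-bridges argument only kills cross terms whose non-bridge factor is $\P_n$-measurable, which is false for the freezing remainders (they carry $W_{\tau_j}$ and intra-interval noise), and with $n^2$ cross terms of individual size up to $O(n^{-3})$ an uncontrolled sum is of the same order $n^{-1}$ as the quantity you are computing. The paper controls precisely these correlations explicitly via \eqref{eq_thm_optimal_approx_simple_Skorohod1_1}--\eqref{eq_thm_optimal_approx_simple_Skorohod4}, showing the extra contribution $X_2^n$ is $O(n^{-3})$; your sketch needs an analogous quantitative argument before the Riemann-sum limit can be taken.
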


\begin{remark}\label{remark_main_thm}
$\left.\right.$
\begin{enumerate}
\item The assumptions of Theorem \ref{thm_optimal_approx_simple_Skorohod} together with Theorem \ref{thm_Ito_formula_Skorohod} ensure the existence of the Skorohod integral \eqref{eq_Skorohod_integral_opt_approx}.
 \item The Lipschitz and linear growth conditions on the integrand in \eqref{eq_optimal_approx_simple_Ito_rate} are crucial for the proofs in \cite{Przybylowicz} and \cite{Mueller_Gronbach} due to an reduction on Wagner-Platen schemes. In contrast, our proofs are based on the Wiener chaos decomposition of the integrand $f$, similarly to Theorem \ref{thm_Ito_formula_Skorohod}. As a byproduct, Theorem \ref{thm_optimal_approx_simple_Skorohod} generalizes \eqref{eq_optimal_approx_simple_Ito_rate} for It\^o integrals to integrands $f \in C^{1,2}([0,1] \times \R)$ with (HG) only.
 \item A further sufficient condition for the assertion is given in Remark \ref{remark2_main_thm} below.
\end{enumerate}
\end{remark}

Before we prove the main theorem, we firstly notice these helpful and essential computations on the Gaussian random variables involved:

\begin{remark}\label{remark_norm_computations} We denote the Brownian bridge as
\[
B^n_t := W_t - W_t^{\lin}.
\]
For all $s,t \in [0,1]$, $1 \leq i,j \leq n$, and via \eqref{eq_cov_W_lin}, we have 
\begin{align}
\displaybreak[0]
&\ex[W_s W_t^{\lin}] = \ex[W_s^{\lin} W_t^{\lin}] \nonumber\\ 
&\qquad = (s\wedge t)\eins_{\{\lfloor ns\rfloor \neq \lfloor nt\rfloor\}} + \left(\lfloor nt\rfloor/n + n(t-\lfloor nt\rfloor/n)(s-\lfloor nt\rfloor/n)\right)\eins_{\{\lfloor ns\rfloor=\lfloor nt\rfloor\}} \leq s\wedge t,\label{eq_covariances1}\\
&\ex[B^n_s B^n_t]  = \ex[B^n_s W_t]\nonumber\\ 
&\qquad = \left(s\wedge t - \lfloor nt\rfloor/n -(s-\lfloor nt\rfloor/n)(t-\lfloor nt\rfloor/n)n\right)\eins_{\{\lfloor ns\rfloor=\lfloor nt\rfloor\}} \leq (4n)^{-1},\label{eq_covariances3}\\
&\ex[B^n_s W_t^{\lin}] =0,\label{eq_covariances4}\\
&\int_{(i-1)/n}^{i/n}\int_{(j-1)/n}^{i/n} \ex[B^n_s B^n_t] ds dt = \eins_{\{i=j\}} \frac{1}{12}n^{-3}.\label{eq_covariances5}
\end{align}
\end{remark}

\begin{proof}[Proof of Theorem \ref{thm_optimal_approx_simple_Skorohod}]
We assume $\{\tau_2,\ldots, \tau_K\} \in \frac{1}{n}\N$ and use $(t_2,\ldots, t_K)=(\tau_2,\ldots, \tau_K)$ for notational shortance. The proof of the general assertion is a straightforward modification. We make use of the shorthand notations
\begin{align}
\bar{l} := (l_1,\ldots, l_K) \in \N^K,\quad |\bar{l}| :=\sum l_i,\quad  a'_{\bar{l}}(s) := \dfrac{\partial}{\partial s}a_{l_1,\ldots, l_K}(s).\label{eq_shorthand_l} 
\end{align}

The proof is divided into three steps. 

In the the first step, the Lipschitz continuity and H\"older growth condition are applied to establish some upper bounds for the terms involved in the following proof.

The second step is devoted to a simplification of the mean squared error. It will be shown that the computation of 
$$
\ex\left[\left(\int_{0}^{1}u^M_{s} dW_s - \ex[\int_{0}^{1}u^M_{s} dW_s|\P_n]\right)^2\right]
$$
can be reduced to the computation of
$$
\sum_{M\geq 0}\ex\left[\left(\sum_{i=1}^{n}\int_{(i-1)/n}^{i/n} B^n_s \diamond \sum_{|\bar{l}| = M} a'_{\bar{l}}(s) W_s^{\diamond l_1} \diamond \ldots \diamond W_{t_K}^{\diamond l_K} ds\right)^2\right],
$$
since the difference between both is of  order $\mathcal{O}(n^{-3})$.

Finally, in the third step, the asserted constant
$$
\frac{1}{\sqrt{12}} \left(\int_{0}^{1} \ex[\mathcal{L} f^{\diamond} (s, \tau_2,\ldots, \tau_K,W_s,W_{\tau_2}, \ldots, W_{\tau_K})^2] ds\right)^{1/2}
$$
is identified in the simplified mean squared error and the analysis is completed by the upper bounds from Step 1.

\textit{Step 1}: 

We firstly observe some upper bounds by the regularity assumptions.  Thanks to \eqref{eq_thm_Ito_formula_Skorohod4}, the growth condition (HG) and the Cauchy-Schwarz inequality, for $u,v \in [0,1]$, we have
\begin{align}
\displaybreak[0]
&\sum_{M \geq 0} \ex\left[\left|\sum_{|\bar{l}| = M} \left(a'_{\bar{l}}(u) -  a'_{\bar{l}}(v)\right) W_{t_1}^{\diamond l_1} \diamond \ldots \diamond W_{t_K}^{\diamond l_K}\right|^2\right]\nonumber\\
&= \ex\left[|\sum_{M \geq 0} \sum_{|\bar{l}| = M} \left(a'_{\bar{l}}(u) -  a'_{\bar{l}}(v)\right) W_{t_1}^{\diamond l_1} \diamond \ldots \diamond W_{t_K}^{\diamond l_K}|^2\right]\nonumber\\ 
&=\ex[|\mathcal{L}f(u, t_2,\ldots, t_K,W_{t_1},W_{t_2}, \ldots, W_{t_K}) - \mathcal{L}f(v, t_2,\ldots, t_K,W_{t_1},W_{t_2}, \ldots, W_{t_K})|^2]\nonumber\\
&\leq 2\,c^2\,(1+\sum_{i=1}^{K} t_i)|u-v|^{2\ve} \leq 2\,c^2\,(K+1)|u-v|^{2\ve}.\label{eq_thm_optimal_approx_simple_Skorohod0}
\end{align}
Similarly, via the Lipschitz condition (L), we obtain
\begin{align}
\displaybreak[0]
&\sum_{M \geq 0} \ex\left[\left|\sum_{|\bar{l}| = M} a'_{\bar{l}}(s) \left(W_{u}^{\diamond l_1} -W_{v}^{\diamond l_1}\right)\diamond \ldots \diamond W_{t_K}^{\diamond l_K}\right|^2\right]\nonumber\\ 
&=\ex[|\mathcal{L}f(s, t_2,\ldots, t_K,W_{u},W_{t_2}, \ldots, W_{t_K}) - \mathcal{L}f(s, t_2,\ldots, t_K,W_{v},W_{t_2}, \ldots, W_{t_K})|^2]\nonumber\\
&\leq c^2\ex[|W_u-W_v|^2]=c^2|u-v|.\label{eq_thm_optimal_approx_simple_Skorohod00}
\end{align}
Hence, via \eqref{eq_thm_optimal_approx_simple_Skorohod0},  \eqref{eq_thm_optimal_approx_simple_Skorohod00} and $(a+b)^2 \leq 2(a^2+b^2)$, 
\begin{align}
\displaybreak[0]
&\sum_{M \geq 0} \ex\left[\left|\sum_{|\bar{l}| = M} \left(a'_{\bar{l}}(s)W_{u}^{\diamond l_1}\diamond \ldots \diamond W_{t_K}^{\diamond l_K} - a'_{\bar{l}}(t)W_{v}^{\diamond l_1}\diamond \ldots \diamond W_{t_K}^{\diamond l_K}\right)\right|^2\right]\nonumber\\ 
&=\ex[|\mathcal{L}f(s, t_2,\ldots, t_K,W_{u},W_{t_2}, \ldots, W_{t_K}) - \mathcal{L}f(t, t_2,\ldots, t_K,W_{v},W_{t_2}, \ldots, W_{t_K})|^2]\nonumber\\
&\leq 4c^2(K+1)|s-t|^{2\ve} + 2c^2|u-v|.\label{eq_thm_optimal_approx_simple_Skorohod000}
\end{align}
In particular, by \eqref{eq_thm_optimal_approx_simple_Skorohod00}-\eqref{eq_thm_optimal_approx_simple_Skorohod000}, $|bc-a^2|\leq |b-a||c-a| + |a|(|b-a|+|c-a|)$, the Cauchy-Schwarz inequality, for $s,t,u,v,w \in [0,1]$, $\max\{|s-t|, |u-w|,|v-w|\}\leq 1/n$, we conclude a constant $c' >0$, independently of $n$, such that
\begin{align}
\displaybreak[0]
&\sum_{M \geq 0} \left|\ex\left[\left(\sum_{|\bar{l}| = M} a'_{\bar{l}}(s)W_{u}^{\diamond l_1}\diamond \ldots \diamond W_{t_K}^{\diamond l_K}\right)\left(\sum_{|\bar{l}| = M} a'_{\bar{l}}(s)W_{v}^{\diamond l_1}\diamond \ldots \diamond W_{t_K}^{\diamond l_K}\right)\right] \right.\nonumber\\
&\qquad \left. - \ex\left[\left(\sum_{|\bar{l}| = M} a'_{\bar{l}}(t)W_{w}^{\diamond l_1}\diamond \ldots \diamond W_{t_K}^{\diamond l_K}\right)^2\right]\right|\nonumber\\
\displaybreak[0]
&=\ex[|\mathcal{L}f(s, t_2,\ldots,W_{u},\ldots) \mathcal{L}f(s, t_2,\ldots,W_{v},\ldots)  - \mathcal{L}f(t, t_2,\ldots,W_{w},\ldots)^2|]\nonumber\\
\displaybreak[0]
&\leq \left(4c^2(K+1)|s-t|^{2\ve} + 2c^2|u-w|\right)^{1/2}\left(4c^2(K+1)|s-t|^{2\ve} + 2c^2|v-w|\right)^{1/2}\nonumber\\ 
&\quad + 2c|w|^{1/2}\left(\left(4c^2(K+1)|s-t|^{2\ve} + 2c^2|u-w|\right)^{1/2} + \left(4c^2(K+1)|s-t|^{2\ve} + 2c^2|v-w|\right)^{1/2}\right)\nonumber\\
&\leq c'\, n^{-(\ve \wedge 1/2)}.\label{eq_thm_optimal_approx_simple_Skorohod0abc}
\end{align}
The Lipschitz continuity (L) implies that for fixed $t := (t_1,\ldots, t_K) \in [0,1]^{K}$, all derivatives $\dfrac{\partial}{\partial x_j}\mathcal{L}f(t,x)$, $j\in \{1,\ldots, K\}$, exist almost surely with respect to the Lebesgue measure on $\R$ and are bounded by $c$ (cf. e.g. \cite[Theorem 5.2.6]{Bogachev}). Since the integrand in \eqref{eq_Skorohod_integral_opt_approx} is a smooth functional of a multivariate Gaussian distribution with a continuous density, we conclude from \eqref{eq_thm_Ito_formula_Skorohod4} the uniform bound
\begin{align}
\displaybreak[0]
&\max_{j\in \{1,\ldots, K\}}\sum_{M \geq 0} \ex\left[\left(\sum_{|\bar{l}| = M} a'_{\bar{l}}(s) l_j W_{t_j}^{\diamond l_j-1} \diamond \ldots \diamond W_{t_K}^{\diamond l_K}\right)^2\right] \nonumber\\
&= \max_{j\in \{1,\ldots, K\}} \sum_{M \geq 0} \ex\left[\left(\dfrac{\partial}{\partial x_j} \sum_{|\bar{l}| = M} a'_{\bar{l}}(s) W_{t_1}^{\diamond l_1} \diamond \ldots \diamond W_{t_K}^{\diamond l_K}\right)^2\right]\nonumber\\
\displaybreak[0]
&=\max_{j\in \{1,\ldots, K\}}\ex\left[\left(\dfrac{\partial}{\partial x_j}\mathcal{L}f(s,t_2,\ldots, t_K,W_{t_1}, \ldots, W_{t_K})\right)^2\right] \leq c^2.\label{eq_thm_optimal_approx_simple_Skorohod0c_2}
\end{align}

\textit{Step 2}: 

Due to the Wiener chaos decomposition of the integrand in \eqref{eq_thm_Ito_formula_Skorohod1}, the problem is reduced to the orthogonal Wiener chaoses
\begin{equation}\label{eq_wiener_chaos_opt_approx_Skorohod_int}
u^M_{s} := \sum_{|\bar{l}| = M} a_{\bar{l}}(s) W_s^{\diamond l_1} \diamond \ldots \diamond W_{t_K}^{\diamond l_K}
\end{equation}
for fixed $M \in \N$. Thanks to the integration by parts formula \eqref{eq_thm_Ito_formula_Skorohod3}, Proposition \ref{prop_Wick_conditional_expectation}, Proposition \ref{prop_S_transform_properties} (iv) and the expansion
\begin{equation}\label{eq_Wick_power_difference_expansion_Brownian_bridge}
W_t^{\diamond k} - (W_{t}^{\lin})^{\diamond k} = B^n_t \diamond \sum\limits_{j=1}^{k} W_t^{\diamond k-j} \diamond (W_{t}^{\lin})^{\diamond j-1}, 
\end{equation}
we have
\begin{align}
\displaybreak[0]
&\int_{0}^{1}u^M_{s} dW_s - \ex[\int_{0}^{1}u^M_{s} dW_s|\P_n]\nonumber\\
&=\sum_{|\bar{l}| = M} \frac{1}{l_1+1}\left(\int_{0}^{1}a'_{\bar{l}}(s) W_s^{\diamond l_1+1}ds - \ex[\int_{0}^{1}a'_{\bar{l}}(s) W_s^{\diamond l_1+1}ds|\P_n]\right) \diamond \ldots \diamond W_{t_K}^{\diamond l_K}\nonumber\\
\displaybreak[0]
&=\int_{0}^{1} \sum_{|\bar{l}| = M} \frac{1}{l_1+1} a'_{\bar{l}}(s)\left(W_s^{\diamond l_1+1} - (W_{s}^{\lin})^{\diamond l_1+1}\right)ds \diamond  \ldots \diamond W_{t_K}^{\diamond l_K}\nonumber\\
\displaybreak[0]
&=\int_{0}^{1} B^n_s \diamond \sum_{|\bar{l}| = M} a'_{\bar{l}}(s) \frac{1}{l_1+1} \sum\limits_{l_0=1}^{l_1+1} W_s^{\diamond l_1+1-l_0} \diamond (W_{s}^{\lin})^{\diamond l_0-1} \diamond  \ldots \diamond W_{t_K}^{\diamond l_K} ds\nonumber\\
&=\sum_{i=1}^{n}\int_{(i-1)/n}^{i/n} B^n_s \diamond \sum_{|\bar{l}| = M} a'_{\bar{l}}(s) \frac{1}{l_1+1} \sum\limits_{l_0=1}^{l_1+1} W_s^{\diamond l_1+1-l_0} \diamond (W_{s}^{\lin})^{\diamond l_0-1} \diamond  \ldots \diamond W_{t_K}^{\diamond l_K} ds.\label{eq_thm_optimal_approx_simple_Skorohod1}
\end{align}

Now we show that instead of \eqref{eq_thm_optimal_approx_simple_Skorohod1} it suffices to consider the simpler terms
\begin{align}
\sum_{i=1}^{n}\int_{(i-1)/n}^{i/n} B^n_s \diamond \sum_{|\bar{l}| = M} a'_{\bar{l}}(s) W_s^{\diamond l_1} \diamond \ldots \diamond W_{t_K}^{\diamond l_K} ds,\label{eq_thm_optimal_approx_simple_Skorohod1_simple_error} 
\end{align}
which will be closer to $\mathcal{L} u^M_{s}$ in the asserted constant. The difference of \eqref{eq_thm_optimal_approx_simple_Skorohod1_simple_error} and \eqref{eq_thm_optimal_approx_simple_Skorohod1} is based on the following random variables for $l_1>0$, which are reformulated via \eqref{eq_Wick_power_difference_expansion_Brownian_bridge}:
\begin{align}
 &W_s^{\diamond l_1}  - \frac{1}{l_1+1} \sum\limits_{l_0=1}^{l_1+1} W_s^{\diamond l_1+1-l_0} \diamond (W_{s}^{\lin})^{\diamond l_0-1}\nonumber\\
 &= \frac{1}{l_1+1} \sum\limits_{l_0=1}^{l_1+1} W_s^{\diamond l_1+1-l_0} \diamond \left( W_s^{\diamond l_0-1}- (W_{s}^{\lin})^{\diamond l_0-1}\right) = B^n_s \diamond \frac{1}{l_1+1} \sum_{l_0=1}^{l_1+1} \sum_{m=1}^{l_0-1} W_s^{\diamond l_1-m} \diamond (W_s^{\lin})^{\diamond m-1}\nonumber\\
 &=B^n_s \diamond \sum_{m=1}^{l_1} \left(\frac{l_1+1-m}{l_1+1}\right) W_s^{\diamond l_1-m} \diamond (W_s^{\lin})^{\diamond m-1}.\label{eq_thm_optimal_approx_simple_Skorohod1_error_rv}
\end{align}
All $L^2$-norms are finally based on the computations in Remark \ref{remark_norm_computations}. Making use of \eqref{eq_Wick_product_of_Gaussian_inner_product} and \eqref{eq_covariances1}-\eqref{eq_covariances4}, for all covariances involved and necessarily $|\bar{l}| = |\bar{l}'|$, we observe
\begin{align}
\displaybreak[0]
&\ex[(B^n_s \diamond W_{t_1}^{\diamond l_1} \diamond  \ldots \diamond W_{t_K}^{\diamond l_K}) (B^n_{s'} \diamond W_{t_1'}^{\diamond l_1'} \diamond  \ldots \diamond W_{t_K'}^{\diamond l_K'})]\nonumber\\ 
&= \ex[B^n_s B^n_{s'}] \ex\left[\left(W_{t_1}^{\diamond l_1}\diamond  \ldots \diamond W_{t_K}^{\diamond l_K}\right) \left(W_{t_1'}^{\diamond l_1'}\diamond  \ldots \diamond W_{t_K'}^{\diamond l_K'}\right)\right]\nonumber\\
&\qquad + \sum\limits_{\substack{\lfloor ns'\rfloor = \lfloor n t_j\rfloor\\
\lfloor ns\rfloor = \lfloor n t_{j'}'\rfloor}}\ex[B^n_{s'} W_{t_j}] \ex[B^n_{s} W_{t_{j'}'}] l_j l_{j'}'\ex\left[\left(W_{t_j}^{\diamond l_j-1} \diamond \ldots \diamond W_{t_K}^{\diamond l_K}\right) \left(W_{t_{j'}'}^{\diamond l_{j'}'-1} \diamond \ldots \diamond W_{t_K'}^{\diamond l_K'}\right)\right].
\label{eq_thm_optimal_approx_simple_Skorohod1_1}
\end{align}
Similarly, due to \eqref{eq_covariances1}-\eqref{eq_covariances4}, extracting only one covariance, for \eqref{eq_thm_optimal_approx_simple_Skorohod1_error_rv} we have the simple upper bounds
\begin{align}
&\ex\left[\left(B^n_s \diamond \sum_{m=1}^{l_1} \left(\frac{l_1+1-m}{l_1+1}\right) W_s^{\diamond l_1-m} \diamond (W_s^{\lin})^{\diamond m-1}\right) \left(B^n_{s'} \diamond \sum_{m=1}^{l_1} \cdots \right)\right]\nonumber\\ 
&= \ex[B^n_s B^n_{s'}] \ex\left[\left(\sum_{m=1}^{l_1} \cdots \right) \left(\sum_{m=1}^{l_1} \cdots\right)\right]\nonumber\\
&\qquad + \ex[B^n_{s'} W_{s}] \ex\left[\left(\sum_{m=1}^{l_1} \frac{(l_1+1-m)(l_1-m)}{(l_1+1)} B^n_s \diamond W_s^{\diamond l_1-m-1} \diamond (W_s^{\lin})^{\diamond m-1} \right) \left(\sum_{m=1}^{l_1} \cdots\right)\right]\nonumber\\
&\leq \ex[B^n_s B^n_{s'}] l_1^2 (l_1-1)! (s \wedge s')^{l_1-1} + \ex[B^n_{s'} W_{s}] (l_1-1) \, l_1 (l_1-1)! (s \wedge s')^{l_1-1}\nonumber\\
&\leq 2\ex[B^n_s B^n_{s'}] l_1 l_1! (s \wedge s')^{l_1-1}.\label{eq_thm_optimal_approx_simple_Skorohod1_error_norm}
\end{align}
Thus, in covariances of Wick products of the type \eqref{eq_thm_optimal_approx_simple_Skorohod1_1} including the random variables \eqref{eq_thm_optimal_approx_simple_Skorohod1_error_rv}, \eqref{eq_thm_optimal_approx_simple_Skorohod1_error_norm} shows that the term \eqref{eq_thm_optimal_approx_simple_Skorohod1_error_rv} behaves in upper bounds as a random variable of the type
\begin{align}\label{eq_thm_optimal_approx_simple_Skorohod1_error_norm_simp}
B^n_s \diamond l_1 W_{s}^{\diamond l_1-1}.
\end{align}
Hence, due to \eqref{eq_covariances1}-\eqref{eq_covariances4}, \eqref{eq_Wick_product_of_Gaussian_inner_product}, \eqref{eq_thm_optimal_approx_simple_Skorohod1_error_rv}-\eqref{eq_thm_optimal_approx_simple_Skorohod1_error_norm_simp}, the upper bound in \eqref{eq_thm_optimal_approx_simple_Skorohod0c_2} and the Cauchy-Schwarz inequality, for the $L^2$-norm of the difference of \eqref{eq_thm_optimal_approx_simple_Skorohod1_simple_error} and \eqref{eq_thm_optimal_approx_simple_Skorohod1} we conclude
\begin{align*}
&\ex\left[\left(\sum_{i=1}^{n}\int_{(i-1)/n}^{i/n} B^n_s \diamond \sum_{|\bar{l}| >0} a'_{\bar{l}}(s) \left(B^n_s \diamond \sum_{m=1}^{l_1} \frac{l_1+1-m}{l_1+1} W_s^{\diamond l_1-m} \diamond (W_s^{\lin})^{\diamond m-1}\right) \diamond \ldots \diamond W_{t_K}^{\diamond l_K} ds\right)^2\right]\nonumber\\ 
&\leq \sum_{M\geq 0}\ex\left[\left(\sum_{i=1}^{n}\int_{(i-1)/n}^{i/n} (B^n_s)^{\diamond 2} \diamond \sum_{|\bar{l}| = M} a'_{\bar{l}}(s) l_1 W_{s}^{\diamond l_1-1}\diamond  \ldots \diamond W_{t_K}^{\diamond l_K} ds\right)\right]\nonumber\\ 
&=\sum_{i,i'=1}^{n}\int_{(i-1)/n}^{i/n}\int_{(i'-1)/n}^{i'/n} \ex\left[(B^n_s)^{\diamond 2} (B^n_{s'})^{\diamond 2}\right] \ex\left[\left(\dfrac{\partial}{\partial x_1}\mathcal{L}f(s,t_2,\ldots, t_K,W_{t_1}, \ldots, W_{t_K})\right)^2\right]^{1/2}\nonumber\\
&\qquad \cdot \ex\left[\left(\dfrac{\partial}{\partial x_1}\mathcal{L}f(s',t_2,\ldots, t_K,W_{t_1}, \ldots, W_{t_K})\right)^2\right]^{1/2} ds \, ds'\nonumber\\ 
&\leq c^2 \sum_{i=1}^{n}\int_{(i-1)/n}^{i/n}\int_{(i-1)/n}^{i/n} 2 (\ex[B^n_s B^n_{s'}])^2 ds \, ds'\ \leq \frac{c^2}{24} n^{-3}.
\end{align*}
Thus it suffices to consider the simplified mean squared error via \eqref{eq_thm_optimal_approx_simple_Skorohod1_error_rv} as
\begin{align}\label{eq_thm_optimal_approx_simple_Skorohod1_error_new}
f_n^2 = \sum_{M\geq 0}\ex\left[\left(\sum_{i=1}^{n}\int_{(i-1)/n}^{i/n} B^n_s \diamond \sum_{|\bar{l}| = M} a'_{\bar{l}}(s) W_s^{\diamond l_1} \diamond \ldots \diamond W_{t_K}^{\diamond l_K} ds\right)^2\right].
\end{align}

\textit{Step 3}:

Now we conclude with the computation of the limit in the assertion. 
Hence, for the random variables in \eqref{eq_thm_optimal_approx_simple_Skorohod1_error_new}, we obtain
\begin{align}
\displaybreak[0]
f_n^2
&= \sum_{M\geq 0}\sum_{i=1}^{n}\int_{(i-1)/n}^{i/n} \int_{(i-1)/n}^{i/n} \ex[B^n_s B^n_{s'}]\ex\left[\left(\sum_{|\bar{l}| = M} a'_{\bar{l}}(s) W_{s}^{\diamond l_1}\diamond  \ldots \diamond W_{t_K}^{\diamond l_K}\right)\right.\nonumber\\
&\qquad \qquad \cdot \left.\left(\sum_{|\bar{l}'| = M} a'_{\bar{l}}(s') W_{s'}^{\diamond l_1'}\diamond  \ldots \diamond W_{t_K}^{\diamond l_K'}\right)\right]ds \, ds'\nonumber\\
\displaybreak[0]
&\ + \sum_{M\geq 0}\sum_{i,i'=1}^{n}\int_{(i-1)/n}^{i/n} \int_{(i'-1)/n}^{i'/n} \sum\limits_{\substack{\lfloor ns'\rfloor = \lfloor n t_j\rfloor\\
\lfloor ns\rfloor = \lfloor n t_{j'}\rfloor}}\ex[B^n_{s'} W_{t_j}] \ex[B^n_{s} W_{t_{j'}}] \nonumber\\
&\qquad  \cdot \ex\left[\left(\sum_{|\bar{l}| = M} a'_{\bar{l}}(s) l_j W_{t_j}^{\diamond l_j-1} \diamond \ldots \diamond W_{t_K}^{\diamond l_K}\right)\left(\sum_{|\bar{l}'| = M} a'_{\bar{l}'}(s') l_{j'}'W_{t_{j'}}^{\diamond l_{j'}'-1} \diamond \ldots \diamond W_{t_K}^{\diamond l_K'}\right)\right] ds \, ds'\nonumber\\
\displaybreak[0]
&= \sum_{i=1}^{n}\int_{(i-1)/n}^{i/n} \int_{(i-1)/n}^{i/n} \ex[B^n_s B^n_{s'}]\ex\left[\left(\mathcal{L} f(s,\ldots, W_{s}, \ldots)\right)\left(\mathcal{L} f(s',\ldots, W_{s'}, \ldots)\right)\right]ds \, ds'\nonumber\\
&\ + \sum_{i,i'=1}^{n}\int_{(i-1)/n}^{i/n} \int_{(i'-1)/n}^{i'/n} \sum\limits_{\substack{\lfloor ns'\rfloor = \lfloor n t_j\rfloor\\
\lfloor ns\rfloor = \lfloor n t_{j'}\rfloor}}\ex[B^n_{s'} W_{t_j}] \ex[B^n_{s} W_{t_{j'}}] \nonumber\\
&\qquad  \cdot \ex\left[\left(\dfrac{\partial}{\partial x_{j}}\mathcal{L} f(s,\ldots, W_{s}, \ldots)\right)\left(\dfrac{\partial}{\partial x_{j'}}\mathcal{L} f(s,\ldots, W_{s}, \ldots)\right)\right] ds \, ds'\nonumber\\
\displaybreak[0]
&=: X^n_1 + X^n_2.\label{eq_thm_optimal_approx_simple_Skorohod2}
\end{align}

For the first sum, via \eqref{eq_covariances5}, \eqref{eq_thm_optimal_approx_simple_Skorohod0abc} and the triangle inequality, we obtain
\begin{align}
\displaybreak[0]
&\left|X^n_1 - \frac{1}{12\, n^2} \left(\frac{1}{n}\sum_{i=1}^{n}\ex\left[\left(\mathcal{L} f((i-1)/n, t_2,\ldots, t_K, W_{(i-1)/n}, \ldots, W_{t_K})\right)^2\right]\right)\right|\nonumber\\
&=\left|X^n_1 - \sum_{i=1}^{n}\int_{(i-1)/n}^{i/n} \int_{(i-1)/n}^{i/n} \ex[B^n_s B^n_{s'}]\ex\left[\left(\sum_{M \geq 0}\mathcal{L} u^M_{(i-1)/n}\right)^2\right]ds \, ds'\right|\nonumber\\
&\leq  \sum_{i=1}^{n}\int_{(i-1)/n}^{i/n} \int_{(i-1)/n}^{i/n} \ex[B^n_s B^n_{s'}] c' n^{-(\ve \wedge 1/2)} ds \, ds' \leq \frac{c'}{12} n^{-(2+\ve\wedge 1/2)}.
\label{eq_thm_optimal_approx_simple_Skorohod3}
\end{align}
For the second term in \eqref{eq_thm_optimal_approx_simple_Skorohod2}, by the triangle inequality, the Cauchy-Schwarz inequality,  \eqref{eq_thm_optimal_approx_simple_Skorohod0c_2}, \eqref{eq_covariances3} and \eqref{eq_covariances5}, we have
\begin{align}\label{eq_thm_optimal_approx_simple_Skorohod4}
\displaybreak[0] 
|X^n_2| \leq c^2 \sum_{i,i'=1}^{n}\int_{(i-1)/n}^{i/n} \int_{(i'-1)/n}^{i'/n} \sum\limits_{\substack{\lfloor ns'\rfloor = \lfloor n t_j\rfloor\\
\lfloor ns\rfloor = \lfloor n t_{j'}\rfloor}}\ex[B^n_{s'} W_{t_j}] \ex[B^n_{s} W_{t_{j'}}]  ds \, ds' \leq c^2 \frac{1}{12} n^{-3}.
\end{align}

Due to Step 1, \eqref{eq_thm_optimal_approx_simple_Skorohod1_error_new}, \eqref{eq_thm_optimal_approx_simple_Skorohod2}-\eqref{eq_thm_optimal_approx_simple_Skorohod4} and
\[
\lim_{n \rightarrow \infty}\frac{1}{n}\sum_{i=1}^{n}\ex\left[\left(\mathcal{L} f((i-1)/n, t_2,\ldots, t_K, W_{(i-1)/n}, \ldots, W_{t_K})\right)^2\right] =C,
\]
we conclude the assertion.
\end{proof}

\begin{remark}\label{remark2_main_thm}
$\left.\right.$
\begin{enumerate}
 \item We notice by \eqref{eq_thm_optimal_approx_simple_Skorohod000}, \eqref{eq_thm_optimal_approx_simple_Skorohod0c_2} in the proof that it suffices to assume the weaker conditions in Theorem \ref{thm_optimal_approx_simple_Skorohod} :
\begin{align*}
\displaybreak[0] 
&\ex[|\mathcal{L}f(s, t_2,\ldots, t_K,W_{u},\ldots, W_{t_K}) - \mathcal{L}f(t, t_2,\ldots, t_K,W_{v},\ldots, W_{t_K})|^2]\leq c(|s-t|+|u-v|)^{\ve},\\
&\max_{j\in \{1,\ldots, K\}}\ex\left[\left(\dfrac{\partial}{\partial x_j}\mathcal{L}f(s,t_2,\ldots, t_K,W_{t_1}, \ldots, W_{t_K})\right)^2\right] \leq c,
\end{align*}
for some constants $c,\ve>0$ instead of (L) and (HG).
\item For example, by \eqref{eq_Wick_sine} in Remark \ref{remark_sine}, Theorem \ref{thm_optimal_approx_simple_Skorohod} applies on the integrand
\[
\sin\left(W_s + \sum_{i=2}^{K} W_{\tau_i}\right)_{s \in [0,1]} 
\]
with
\begin{align*}
&f^{\diamond}(s,\tau_2,\ldots, \tau_K, W_s,W_{\tau_2},\ldots, W_{\tau_K})= \exp\left(-\frac{1}{2}\|W_s + \sum_{i=2}^{K} W_{\tau_i}\|_{L^2}^2\right) \sin^{\diamond}\left(W_s + \sum_{i=2}^{K} W_{\tau_i}\right)
\end{align*}
and
\begin{align*}
&\mathcal{L} f^{\diamond}(s,\tau_2,\ldots, \tau_K, W_s,W_{\tau_2},\ldots, W_{\tau_K}) \\
&= \left((\dfrac{\partial}{\partial s} + \sum_{i=2}^{K} \dfrac{\partial}{\partial \tau_j})\exp\left(-\frac{1}{2}\|W_s + \sum_{i=2}^{K} W_{\tau_i}\|^2\right)\right) \sin^{\diamond}\left(W_s + \sum_{i=2}^{K} W_{\tau_i}\right).
\end{align*}
One can check the assumptions in (i) above by simple computations and Proposition \ref{prop_Wickexp_properties} (iii).

\end{enumerate}
\end{remark}

\end{document}